\newtheorem{theorem}{Theorem}[section]
\newtheorem{lemma}[theorem]{Lemma}
\newtheorem{proposition}[theorem]{Proposition}
\numberwithin{equation}{section}
\theoremstyle{definition}
\newtheorem{definition}[theorem]{Definition}
\newtheorem{remark}[theorem]{Remark}
 \def\Rset{\mathbb{R}}
 \def\Zset{\mathbb{Z}}
 \def\Tset{\mathbb{T}}
\def\wh{\widehat}
\def\leq{\leqslant }
\def\geq{\geqslant}
\def\RA{\mathbb{R}^{\mathcal A}}
\def\A{\mathcal{A}}
\def\iat{I_{\alpha}^t}
\def\iab{I_{\alpha}^b}
\def\RS{\mathbb{R}^{\Sigma}}
\def\RSO{\mathbb{R}_0^{\Sigma}}
\def\cS{{\mathcal S}}
\def\CruT{C^r({\bf u}(T))}
\begin{document}

\title[H\"older regularity of the solutions of the cohomological equation for Roth type i.e.m.]{H\"older regularity of the solutions of the cohomological equation for Roth type interval exchange maps}

\author[S. Marmi]{Stefano Marmi}
\address{Scuola Normale
Superiore and  C.N.R.S. UMI 3483 Laboratorio Fibonacci, Piazza dei Cavalieri 7, 56126 Pisa, Italy}
\email{ s.marmi(at)sns.it}
\author[J.-C. Yoccoz]{Jean-Christophe Yoccoz}
\address{Coll\`ege de France, 3, Rue d'Ulm, 75005
Paris, France}
\email{ jean-c.yoccoz(at)college-de-france.fr }

\date{July 7, 2014}

\subjclass[2000]{Primary: 37C15 (Topological and differentiable equivalence, conjugacy, invariants,
moduli, classification); Secondary: 37E05 (maps of the interval), 37J40 (Perturbations,
normal forms, small divisors, KAM theory, Arnold diffusion), 11J70 (Continued fractions
and generalizations)}

\begin{abstract}
We prove that the solutions of the cohomological equation for Roth type interval exchange maps 
are H\"older continuous provided that the datum is of class $C^r$ with $r>1$ and belongs to a finite-codimension
linear subspace. 
\end{abstract}

\maketitle

\tableofcontents

\section{Introduction}\label{Intro}

\subsection{Roth type interval exchange maps}

An {\em interval exchange map} (i.e.m.)  $T$ on a (finite length) interval $I$ is
a one-to-one map which is locally a translation except at a finite number of discontinuities.
Clearly $T$ is orientation-preserving and preserves Lebesgue measure. 
When the number $d$ of intervals of continuity of $T$  is equal to $2$ 
then (modulo identification of
the endpoints of $I$) an i.e.m. corresponds to a rotation of the circle.
It can be thought as the first return map
of a linear flow on a two--dimensional torus on a transversal
circle. 
Analogously when $d\ge 3$ by singular suspension any
i.e.m.\ is related to the linear flow on a suitable translation
surface (see, e.g.\ [Ve1] for details, or section 2.2 below)
typically having genus higher than $1$. 
All translation surfaces obtained by suspension from i.e.m.\ with the same combinatorics have the same genus $g$,
and the same number $s$ of marked points; these numbers are related to the number $d$ of intervals of continuity of $T$
by the formula $d=2g+s-1$.

Rauzy and Veech have defined an  algorithm
that generalizes the classical continued fraction
algorithm (corresponding to the choice $d=2$) and associates to an i.e.m.\ another i.e.m.\ which is its first
return map to an appropriate subinterval [Ra, Ve2]. 
The Rauzy-Veech algorithm stops if and only if the i.e.m.\ has a {\it
connection},
i.e.\ a finite orbit which starts and ends at a discontinuity.
Both
 Rauzy--Veech "continued fraction" algorithm and its accelerated
version due to Zorich [Zo2] (see also [MMY1]) are ergodic w.r.t.\ an absolutely continous
invariant measure in the space of normalized standard i.e.m.'s. 
The ergodic properties of these renormalization dynamics in parameter space have been studied in detail ([Ve3],[ Ve4], [Zo3], [Zo4], [AGY], [B], [AB],  [Y4]).

In \cite{MMY1} we introduced a class of interval exchange maps, called {\it Roth type}, which has full measure in parameter space and for which 
the cohomological equation could be solved (after a finite-dimensional correction) 
with a loss of differentiability of about two derivatives (one obtains a continuous solution starting from a datum 
of class $C^1$ on each interval of continuity of $T$ and with a derivative of bounded variation). Roth type i.e.m.\ generalize Roth type irrational numbers. 
Roth type numbers have a purely arithmetical definition but can also be characterized by means of the regularity of the solutions of the  
cohomological equation associated to the rotation $R_\alpha\, :\,
x\mapsto x+\alpha$ on the circle $\Tset=\Rset/\Zset$. Indeed an irrational number $\alpha$ is of Roth
type if and only if for all $r,s\in\Rset$ with $r>s+1\ge 1$ and for
all functions $\Phi$ of class $C^r$ on $\Tset$ with mean zero,
there exists a unique function $\Psi$ of class
$C^s$ on $\Tset$ with mean zero such that $\Phi=\Psi\circ
R_\alpha-\Psi$.

The goal of this paper is to prove Theorem \ref{thmHolder1},  i.e.\ a stronger regularity result  than in \cite{MMY1} for the solutions of the cohomological equation and which is closer to the optimal result (in the case of circle rotations) quoted above.  
It is stated in full generality in section 3.4. Here we give a less precise statement:

\smallskip
\begin{theorem}
Let $T$ be an interval
exchange map with no connection and of restricted Roth type. 
Let $r $ be a real number $>1$. There exists $\bar\delta >0$ such that given any function 
$\varphi$ of class  $C^r$ on each interval of continuity of $T$ which belongs to the kernel of the boundary operator (defined in section 2.6) one can 
find a piecewise constant function $\chi$ and  a $\bar\delta$--H\"older continuous function $\psi $ such that
$$ \varphi = \chi + \psi \circ T - \psi\; . $$
\end{theorem}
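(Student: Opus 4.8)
The plan is to run the Rauzy--Veech renormalization scheme already used in \cite{MMY1} to prove continuity of $\psi$, but to replace the purely qualitative decay of the renormalized datum by a quantitative rate of decay; that $\varphi$ is $C^r$ with $r>1$ --- a hypothesis neither contained in nor containing the bounded-variation-derivative class of \cite{MMY1} --- is exactly what produces such a rate, and the restricted Roth type hypothesis is what makes the resulting H\"older exponent strictly positive. First I would fix the Rauzy--Veech setup: the nested intervals $I = I^{(0)} \supset I^{(1)} \supset \cdots$, the induced i.e.m.\ $T^{(n)}$ on $I^{(n)}$, the transition matrices $Q^{(n)}$ with column sums $h^{(n)}_\alpha$ (the heights of the Rokhlin towers), and, for a function $f$ on $I$, the partial Birkhoff sums $S_j f(x) = \sum_{0 \leq i < j} f(T^i x)$ and the special Birkhoff sums $S(n)f$, the function on $I^{(n)}$ that equals $S_{h^{(n)}_\alpha} f$ on $I^{(n)}_\alpha$. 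As in \cite{MMY1}, the hypothesis that $\varphi$ lies in the kernel of the boundary operator, combined with the choice of a suitable piecewise constant $\chi$, guarantees that $\varphi_1 := \varphi - \chi$ has special Birkhoff sums tending to $0$; and from restricted Roth type I would extract the quantitative ingredients used below, namely the sub-polynomial growth $\|Q^{(n+1)}\| \leq C_\varepsilon \|Q^{(n)}\|^{1+\varepsilon}$, the comparabilities $|I^{(n)}_\alpha| \asymp |I^{(n)}| \asymp \|Q^{(n)}\|^{-1}$ up to sub-polynomial factors, sub-polynomial control of the discrepancy of the orbits of $T$, and a strictly positive, controlled contraction rate on the relevant invariant subspaces of the renormalization cocycle.

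The heart of the proof, and the step I expect to be the main obstacle, is to upgrade $\|S(n)\varphi_1\|_{C^0(I^{(n)})} \to 0$ to a \emph{rate}: I would show that there are $\tau = \tau(r) > 0$ and $C$ such that, for every $n$,
$$\|S(n)\varphi_1\|_{C^0(I^{(n)})} \leq C\,\|Q^{(n)}\|^{-\tau}, \qquad \|S_j\varphi_1\|_{C^0} \leq C\,\|Q^{(n)}\|^{-\tau} \quad \text{for all } j \leq \|Q^{(n)}\|,$$
the second bound, on the partial Birkhoff sums along orbit segments, following from the first by the standard comparison between partial and special Birkhoff sums. The input producing the rate is that $\varphi_1 \in C^r$ with $r>1$, so that the modulus of continuity of $\varphi_1'$ is dominated by a positive power of the length; combined with the Roth-type control of the matrices and of the discrepancy, this lets $\|S(n)\varphi_1\|_{C^0}$ be estimated level by level --- best done along a subsequence of good renormalization times, e.g.\ Zorich times --- against $\|Q^{(n)}\|^{-\tau}$, with $\tau$ harvested from the contraction rate of the cocycle and degenerating as $r \to 1^+$. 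The two delicate points are that the reduction producing $\chi$ must be carried out afresh for the $C^r$ class, and that $\varphi_1'$ and the cocycle interact through the tower geometry, so the level-by-level estimate must keep track of how $\|Q^{(m)}\|$ grows between consecutive good times.

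With the rate available, I would construct $\psi$ as a uniform limit. For each $n$, let $\psi_n$ be the function on $I$ that vanishes on $I^{(n)}$ and is extended to $I$ via the coboundary relation $\psi_n(T^j x) = S_j\varphi_1(x)$ for $x \in I^{(n)}_\alpha$ and $0 \leq j < h^{(n)}_\alpha$, using the tower decomposition
$$I = \bigsqcup_{\alpha}\,\bigsqcup_{0 \leq j < h^{(n)}_\alpha} T^j(I^{(n)}_\alpha).$$
A short computation (comparing, for $z \in I$, the level-$n$ and level-$(n+1)$ tower representations of $z$) shows that $\psi_{n+1}(z) - \psi_n(z)$ is a partial Birkhoff sum $S_k\varphi_1$ over an orbit segment of length $k < h^{(n+1)}_\beta \leq \|Q^{(n+1)}\|$, so by the second estimate above $\|\psi_{n+1} - \psi_n\|_{C^0(I)} \leq C\,\|Q^{(n)}\|^{-\tau}$. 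By the growth estimates this is summable in $n$, so $\psi_n$ converges uniformly to a continuous function $\psi$ on $I$ (with the value at the single point $\bigcap_n I^{(n)}$ defined by continuity), with $\|\psi - \psi_n\|_{C^0(I)} \leq C'\,\|Q^{(n)}\|^{-\tau}$, and by construction $\varphi_1 = \psi\circ T - \psi$, that is, $\varphi = \chi + \psi\circ T - \psi$.

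It remains to show that $\psi$ is $\bar\delta$-H\"older for a suitable $\bar\delta > 0$. Given $x, y \in I$ with $|x-y|$ small, pick a renormalization level $n$ with $|I^{(n)}| \asymp |x-y|$ (up to sub-polynomial factors) such that $x$ and $y$ lie in a common level-$n$ floor $T^j(I^{(n)}_\alpha)$; writing $x = T^j x'$, $y = T^j y'$ with $x', y' \in I^{(n)}_\alpha$, the function $\psi_n$ is, on this floor, a translate of $S_j\varphi_1$, so
$$|\psi_n(x) - \psi_n(y)| = |S_j\varphi_1(x') - S_j\varphi_1(y')| \leq 2\,\|S_j\varphi_1\|_{C^0} \leq 2C\,\|Q^{(n)}\|^{-\tau},$$
and therefore $|\psi(x) - \psi(y)| \leq |\psi_n(x) - \psi_n(y)| + 2\,\|\psi - \psi_n\|_{C^0(I)} \leq C''\,\|Q^{(n)}\|^{-\tau}$. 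Converting via $|x-y| \asymp |I^{(n)}| \asymp \|Q^{(n)}\|^{-1}$ (up to sub-polynomial corrections) bounds the right-hand side by $C'''\,|x-y|^{\bar\delta}$ for any $\bar\delta < \tau$, after shrinking $\bar\delta$ slightly to absorb the sub-polynomial factors --- which is the claimed H\"older continuity. The point requiring genuine care is the choice of $n$: since consecutive tower floors can have very different widths, the largest $n$ for which $x$ and $y$ share a floor need not satisfy $|I^{(n)}| \asymp |x-y|$, so one must exploit that $\|Q^{(n+1)}\|/\|Q^{(n)}\| \leq \|Q^{(n)}\|^{\varepsilon}$ holds for every $\varepsilon > 0$ (passing to the Zorich acceleration if needed) to ensure a level of comparable scale exists while keeping $\bar\delta$ strictly positive.
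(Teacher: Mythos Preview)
Your overall scheme is close to the paper's, but there is a genuine gap: the claim
\[
\|S_j\varphi_1\|_{C^0} \leq C\,\|Q^{(n)}\|^{-\tau}\quad\text{for all }j\leq\|Q^{(n)}\|
\]
is false. Since $\varphi_1=\psi\circ T-\psi$, one has $S_j\varphi_1=\psi\circ T^j-\psi$, which is uniformly bounded but does \emph{not} decay (take any $j$ with $T^jx$ far from $x$). The time decomposition expresses $S_j\varphi_1$ as $\sum_{\ell<k}b(\ell)$-many special sums at levels $\ell<k$; the resulting bound $\sum_{\ell<k}\|B(n_\ell,n_{\ell+1})\|\,\|B(0,n_\ell)\|^{-\delta_1}$ has negative exponent and is dominated by its \emph{first} term, giving only $O(1)$. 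You invoke this false claim twice: for $\|\psi_{n+1}-\psi_n\|_{C^0}$, and for the H\"older step via $|\psi_n(x)-\psi_n(y)|\leq 2\|S_j\varphi_1\|_{C^0}$. The first use is easily repaired (the increment $\psi_{n_{k+1}}-\psi_{n_k}$ is a sum of at most $\|B(n_k,n_{k+1})\|$ \emph{special} level-$n_k$ Birkhoff sums, each $\leq C\|B(0,n_k)\|^{-\delta_1}$). The second cannot: you must instead bound the \emph{difference} $S_j\varphi_1(x')-S_j\varphi_1(y')=\int_{y'}^{x'}S_j(D\varphi_1)$, which is small because $|x'-y'|\leq|I^{(n_k)}_\alpha|$ while $S_j(D\varphi_1)$ grows only like $\|B(0,n_k)\|^{1-\delta}$ (Lemma~\ref{lemHolder2}, using that $D\varphi_1$ has mean zero). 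This is precisely Lemma~\ref{lemHolder6}.

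The paper also handles the final step differently. Your assumption that $x,y$ share a common floor at a level comparable to $|x-y|$ can fail for \emph{every} level (e.g.\ when an orbit of a singularity separates them); condition (a) controls the \emph{scale} of levels but not which floor a point lies in. The paper avoids this by a space decomposition (Section~\ref{ssspace}): the interval $(x_-,x_+)$ is written as a disjoint union of tower floors of $\mathcal P(\ell)$ for $\ell\geq k$, and $\psi(x_+)-\psi(x_-)$ becomes a telescoping sum of increments $\Delta(J)$, each controlled by Lemma~\ref{lemHolder6}. The bound on $\Delta(I^{(n_k)}_\alpha)$ itself is obtained not via your $\|\psi-\psi_{n_k}\|$ argument but via a dual-cocycle argument (Lemmas~\ref{lemHolder4}--\ref{lemHolder5}): the vector $V(k)=(\Delta(I^t_\alpha(T(n_k))))_\alpha$ nearly satisfies $V(k+1)\approx{}^tB(n_k,n_{k+1})^{-1}V(k)$, and the symplectic structure together with condition (d) forces any such sequence tending to $0$ to decay like $\|B(0,n_k)\|^{-\delta_2}$.
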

\smallskip\noindent

In the theorem one has $\bar\delta < r-1$ and 
the i.e.m.\ $T$ are the Roth type i.e.m.\ for which the Lyapunov exponents of the
KZ-cocycle (see section 2.5) are non zero (in \cite{MMY2} we call this {\it restricted Roth type}). They still form a full measure set
by Forni's theorem \cite{For2}.

The first step in the direction of extending small divisor results
beyond the torus case was achieved by Forni's important paper \cite{For1}
on the cohomological equation associated to linear flows on
surfaces of higher genus. He solved
the cohomological equation on any translation surface for almost every direction and worked in the Sobolev scale obtaining a 
loss of differentiability of at most  $3+\varepsilon $
derivatives (for every $\varepsilon  >0$). 
The improved loss of regularity obtained in \cite{MMY1} (w.r.t.\ \cite{For1}) turned out to be decisive for the proof of a local conjugacy theorem for 
deformations of interval exchange maps (proven in \cite{MMY2}). We recall that the cohomological equation is the linearization of 
the conjugacy equation $T\circ h=h\circ T_0$ for a generalized i.e.m.\
$T$ (see \cite{MMY2}) close to the standard i.e.m.\ $T_0$.
In \cite{For3} sharper results on the loss of differentiability for solutions of the cohomological equations for translations surfaces have been proved: for
almost all translation surface in every stratum of the moduli space and for almost all direction the loss of Sobolev regularity is $1+\varepsilon$ (for any $\varepsilon >0$). As in \cite{For1} the i.e.m.\ case is not considered, nor an explicit diophantine condition is obtained.  Our Theorem \ref{thmHolder1} 
deals with the loss of H\"older regularity (which turns out to be $1+\varepsilon$ for {\it some} $\varepsilon>0$) and we get an explicit diophantine condition, but we do not know if almost every direction on a given translation surface leads  
to a restricted Roth type i.e.m.\ .

The tools of the proof of Theorem \ref{thmHolder1} allow us to prove Theorem \ref{thmcohom2}, a result of independent interest: when 
$T$ is of restricted Roth type, for a function $\varphi$ of class $C^1$ on each interval of continuity of $T$ which belongs to the kernel of the boundary 
operator, there exists a piecewise constant function $\chi$ such that the growth of the Birkhoff sums of $(\varphi - \chi )$ is slower that any positive power of
time. 

\subsection{Summary of the contents}

In section 2 we introduce
interval exchange maps and we develop the continued fraction
algorithms to an extent which will allows us to introduce Roth type
i.e.m.. 
Sections 2.1 and 2.2 are devoted to recalling the basic definitions and the construction of the suspension. 
The elementary step of the 
Rauzy-Veech continued fraction algorithm is described in section 2.3. When the i.e.m.\ has no connection
the Rauzy-Veech  algorithm (section 2.4) 
can be iterated indefinitely. In section 2.5 we introduce the discrete time
Kontsevich-Zorich cocycle and we discuss its relationship with return times 
and with  (special) Birkhoff sums of piecewise constant functions. After having introduced the boundary 
operator (section 2.6) and having briefly described its homological interpretation we conclude (section 2.7) 
with some consequences of the symplecticity of the Kontsevich-Zorich cocycle.

Section 3 is devoted to the study of the cohomological equation
and to the proof of our main theorem.
We first recall (section 3.1) the definition of i.e.m. of restricted Roth type which was given in \cite{MMY2} and deduce (section 3.2) some consequences of the 
hyperbolicity of the Kontsevich-Zorich cocycle. 
After reviewing previous results (section 3.3) on the regularity of the solutions of the cohomological equation \cite{For1,For3}, and especially
those obtained in \cite{MMY1} and refined in \cite{MMY2}, in section 3.4 we give a precise statement of our main result Theorem
\ref{thmHolder1}. 

The strategy of the proof is similar to our previous papers. In section 3.5 
we prove Theorem \ref{thmcohom2}
by estimating special Birkhoff sums of piecewise $C^1$ functions. 
In section 3.6 we obtain better estimates for piecewise $C^r$ functions. 
In sections 3.7 and 3.8 we introduce time and space decompositions which relate in particular general Birkhoff sums to special Birkhoff sums.
In section 3.9 we obtain the required H\"older estimate for pairs of points which are in special relative position; then the space decomposition
allows us to get the general estimate (section 3.10). Section 3.11 deals with the case of higher differentiability ($r>2$).  

\vskip 1. truecm \noindent {\bf Acknowledgements} This research has
been supported by the  following institutions: CNRS, MIUR (PRIN 2011 project ''Teorie geometriche e analitiche dei sistemi Hamiltoniani
in dimensioni finite e infinite''), the ANR project 851 GeoDyM, 
the Coll\`ege de France and
the Scuola Normale Superiore. We are also grateful to the two
former institutions and to the Centro di Ricerca Matematica
``Ennio De Giorgi'' in Pisa for hospitality.

\section{Background on interval exchange maps}\label{Seciem}

\subsection{Interval exchange maps} \label{ssdefiem}

Let $\A$ be an alphabet with $d \geq 2$ symbols.

\begin{definition}\label{defiem1}
An {\it  interval exchange map} (i.e.m.\ ) $T$, acting upon an open bounded interval $I(T)$, is defined by 
two partitions mod.$0$ of $I(T)$  into $d$ open subintervals
indexed by $\A$ (the {\it top} and {\it bottom} partitions):
$$ I(T) = \sqcup \iat (T)= \sqcup\iab (T)\; \hbox{mod}\,  0$$
such that $|\iat (T)| =|\iab (T)|$ for all $\alpha\in\A$. The restriction of $T$ to 
$\iat (T)$ is the translation onto $\iab (T)$. 

The domain of the map $T$ is $\sqcup \iat (T)$, the domain of $T^{-1}$ is $ \sqcup\iab (T)$.
\end{definition}

 \begin{definition}\label{defiem3}
Let $T$ be an i.e.m. The points  $u_1(T)<\cdots<u_{d-1}(T)$ separating the $ \iat (T)$ are called the {\it singularities  of $T$}. The points
$v_1(T)<\cdots<v_{d-1}(T)$ separating the $ \iab (T)$ are called the {\it singularities  of $T^{-1}$}. We also write $I(T)=(u_0(T),u_d(T)) = (v_0(T),v_d(T))$.
 \end{definition} 
 
 \begin{definition}\label{defiem4}
The {\it combinatorial data} of an i.e.m $T$ is the pair $\pi = (\pi_t,\pi_b)$ of bijections from $\A$ onto $\{1,\ldots,d\}$ such
that
$$\iat (T)= (u_{\pi_t(\alpha)-1}(T),u_{\pi_t(\alpha)}(T)),\quad \iab (T)= (v_{\pi_b(\alpha)-1}(T),v_{\pi_b(\alpha)}(T))$$
for each $\alpha \in \A$.
\end{definition}

We always assume that the combinatorial data are {\it irreducible}: for $1 \leq k<d$, we have
$$ \pi_t^{-1}(\{1,\ldots ,k\})\not=\pi_b^{-1}(\{1,\ldots ,k\})\; . $$

\subsection{Suspension and genus}\label{ssSuspension}

Let $T$ be an  i.e.m.\ with combinatorial data $\pi= (\pi_t,\pi_b)$ acting on an interval $I(T) = (u_0(T),u_d(T))$. For $\alpha \in \A$ define
$$\lambda_{\alpha} = |\iat (T)|=|\iab (T)|, \qquad \zeta_{\alpha}=\lambda_{\alpha}
+\sqrt {-1} \,(\pi_b(\alpha)-\pi_t(\alpha)) ,$$
and then, for $0 \leq i \leq d$
$$ U_i = u_0(T) + \sum_{\pi_t \alpha \leq i} \zeta_{\alpha}, \quad V_i = u_0(T) + \sum_{\pi_b \alpha \leq i} \zeta_{\alpha}.$$
One has $U_0 = V_0 =u_0(T)$, $U_d = V_d  = u_d(T)$, and $\Im U_i >0>\Im V_i$ for 
$0<i<d$. The $2d$ segments $[U_{i-1},U_i]$, $[V_{i-1},V_i]$ ($0<i\leq d$) form the boundary of a polygon. Gluing the parallel top and bottom $\zeta_{\alpha}$ sides of this polygon produces a translation surface $M_T$ ([Zo1]), in which the vertices of the polygon define a set $\Sigma$ of marked points. 
\par
The cardinality $s$ of $\Sigma$, the genus $g$ of $M_T$ and the number $d$ of intervals are related by
$$d=2g+s-1\;.$$
Both $g$ and $s$ only depend on $\pi$. 


The genus $g$ can be computed from the combinatorial data as follows. Define an antisymmetric matrix $\Omega =
\Omega (\pi)$ by
\begin{equation*}
\Omega_{\alpha \,\beta}=\left\{
\begin{array}{cc}
+1 & \text{if } \pi_t(\alpha)<\pi_t(\beta),\pi_b(\alpha)>\pi_b(\beta),\\
-1 & \text{if } \pi_t(\alpha)>\pi_t(\beta),\pi_b(\alpha)<\pi_b(\beta),\\
0 &\text{otherwise.}
\end{array} \right.
\end{equation*}
Then the rank of $\Omega$ is $2g$. Actually ([Y1],[Y4]), if one identifies $\RA$ with the relative homology group
$H_1(M_T,\Sigma,\Rset)$ via the basis defined by the sides $\zeta_{\alpha}$ of the polygon, the image of $\Omega$ coincides
with the absolute homology group $H_1(M_T,\Rset)$. Another way to compute $s$ (and thus $g$) consists in going around the
marked points, as explained in section 2.6.

\subsection{The elementary step of the Rauzy-Veech algorithm}\label{ssbasicRV}

\begin{definition}\label{defRV1}
Let $T$ be an interval exchange map. A {\it connection} is a triple 
$(u_i,v_j,m)$ consisting of a singularity of $T$, a singularity of $T^{-1}$ and a nonnegative integer which satisfy
$$ T^m (v_j) = u_i.$$
\end{definition}

Keane has proved \cite{Kea1} that an i.e.m.\ with no connection is minimal.
Having no connection means in particular that no point is both a singularity of $T$ and $T^{-1}$.

\begin{definition}\label{defRV2}
Let $T$ be an i.e.m.\ such that $u_{d-1}(T) \ne v_{d-1}(T)$. This is the case if $T$ has no connection. Define $\hat u_d = \hat v_d:= \max (u_{d-1}(T),v_{d-1}(T))$, $\wh I := (u_0(T),\hat u_d)$. The first return map of $T$ to
$\widehat I$, denoted by $\wh T$, is an i.e.m whose combinatorial data $\widehat \pi$ are canonically labeled by
the same alphabet $\A$ than $\pi$ (cf.[MMY1] p.829). The return time is $1$ or $2$.
The process   $T \mapsto \wh T$ is the {\it elementary step of the Rauzy--Veech renormalization algorithm.}
\end{definition}

%
%

The combinatorial data $\wh \pi$ of $\wh T$ are irreducible.
\par
If $T$ has no connection, then $\widehat T$ has no connection.  This allows to iterate
indefinitely  the elementary step of the algorithm when $T$ has no connection.
\par

\subsection{The Rauzy-Veech algorithm}\label{ssRValgo}

Let $T$ be an i.e.m.\  with no connection acting on a bounded open interval $I(T)$. 

\par
Starting from $T(0) :=T$, we iterate the elementary step of the Rauzy-Veech algorithm described in section \ref{ssbasicRV}. We obtain a sequence 
$(T(n))_{n\geq 0} $ of i.e.m acting on a decreasing sequence of intervals $I(n):=I(T(n))$ with the same left endpoint $u_0(T)$. For $n \geq m$, the i.e.m. $T(n)$ is the first return map of $T(m)$ to $I(n)$. We denote by $\pi (n)$ the combinatorial data of $T(n)$.

\subsection{The discrete time Kontsevich-Zorich cocycle}\label{ssKZ}
Let $T$ be an i.e.m.. 

\begin{definition}\label{defKZ0}
We denote by $\Gamma(T)$ the space of functions on $\sqcup \iat (T)$ which are constant on each $\iat (T)$. It is canonically identified with $\RA$. For a real number $r\ge 0$ we denote by 
$C^r({\bf u}(T))$ the product $ \prod_1^d C^r([u_{i-1}(T),u_i(T)])$. 
\end{definition}

\begin{definition}\label{defKZ01}
Assume that $T$ has no connection. Let $(T(n))_{n\ge 0}$ be the sequence defined by the Rauzy--Veech algorithm. 
For $n\ge m\ge 0$ and $\varphi \in  C^0({\bf u} (T(m)) )$ the  {\it special Birkhoff sum} $S(m,n)\varphi$ is the function in $C^0({\bf u} (T(n)) )$ defined by 
 $$ S(m,n)\varphi(x) = \sum_{0 \leq j <r(x)} \varphi \circ T(m )^j(x),$$
where $x\in [u_{i-1}(T(n)),u_i(T(n))]\subset I(n)$ and $r(x)$ is the return time of $x$ in $ I(n)$ 
under $T(m)$.
\end{definition}

For $m\le n\le p$, one has the cocycle relation 
$$ S(m,p)=S(n,p)\circ S(m,n) \, . $$
The operator $S(m,n)$ maps $\Gamma (T(m))$ onto $\Gamma (T(n))$. We denote by $B(m,n)$ the matrix of the 
restriction of $S(m,n)$ to $\Gamma (T(m))$ w.r.t. the canonical bases of $\Gamma (T(m))$ and $\Gamma (T(n))$.
The coefficient $B_{\alpha\beta}(m,n)$ is therefore equal to the number of visits under $T(m)$ of $I_\alpha (n)$ into $I_\beta (m)$ before returning to $I(n)$. For $x\in I_\alpha (n)$, the return time $r(x)$ is equal to $B_\alpha (m,n):=
\sum_\beta B_{\alpha\beta}(m,n)$. 
We have the partition 
\begin{equation}\label{eqpartition}
 I(m)=\bigsqcup_{\alpha\in\A}\bigsqcup_{0\le j<B_\alpha (m,n)} T(m)^j(I_\alpha^t(n))\;\hbox{mod.}\, 0\, . 
\end{equation}

The matrices $B(m,n)$ satisfy the cocycle relation 
$$ B(m,p)=B(n,p)B(m,n)\, . $$
They define the {\it (extended) Kontsevich-Zorich cocyle}. 

Let $m\ge 0$, and let $\alpha_t,\alpha_b$ be the letters of $\A$ such that $\pi_t(m)\alpha_t = \pi_b(m)\alpha_b=d$. 
For $\alpha,\beta\in\A$, let $E_{\alpha\beta}$ be the elementary matrix whose only nonzero coefficient is equal to $1$ in position $\alpha\beta$. The matrix $B(m,m+1)$ is equal to $\mathbf 1+E_{\alpha_t\alpha_b}$ if $u_{d-1}(T(m))>
v_{d-1}(T(m))$ and to $\mathbf 1+E_{\alpha_b\alpha_t}$ if $u_{d-1}(T(m))<
v_{d-1}(T(m))$. 

It follows that $B(m,n)$ belongs to $SL(\Zset^{\A})$ and has nonnegative coefficients. Moreover, for each $\alpha,\beta\in\A$, the sequence $B_{\alpha\beta}(m,n)$ is a nondecreasing function of $n$. 
It follows from \cite{MMY1} p. 832-833 that for fixed $m$ and  $n$ large enough all coefficients $B_{\alpha\beta}(m,n)$
are strictly positive. 

We define a sequence of integers $(n_k)_{k\ge 0}$ as follows: $n_0=0$ and $n_{k+1}$ is the smallest 
integer such that all coefficients of $B(n_k,n_{k+1})$ are strictly positive.

\subsection{The boundary operator}\label{Secboundary}

Let $\pi$ be irreducible combinatorial data over the alphabet $\A$.
\par

Define a $2d$-element set $\cS$ by
 
 $$ \mathcal S = \{ U_0 = V_0, U_1, V_1, \ldots, U_{d-1}, V_{d-1}, U_d = V_d \} .$$ 
 
 These symbols correspond to the vertices of the polygon produced by the suspension of an i.e.m $T$ with  combinatorial data $\pi$ (section \ref {ssSuspension}). 
 \par
 Going anticlockwise around the vertices (taking the gluing into account) produces a permutation $\sigma$ of $\cS$:
 
 $$
 \begin{array}{lll}
 \sigma (U_i) = V_j & {\rm with }\; \pi_b^{-1}(j+1)= \pi_t^{-1}(i+1) & {\rm for}\; 0 \leq i < d \\
 \sigma (V_{j'}) = U_{i'} & {\rm with }\; \pi_t^{-1}(i')= \pi_b^{-1}(j') & {\rm for}\; 0 < j' \leq d.
 \end{array}
 $$
  
The cycles of $\sigma$ in $\mathcal S$ are canonically associated to the marked points on the translation surface $M_T$ obtained by suspension of $T$. We will denote by $\Sigma $ the set of cycles of $\sigma$, by $s$ the cardinality of $\Sigma$ (cf. section \ref {ssSuspension}).
  
\begin{definition}
Let $\varphi \in C^0({\bf u}(T))$.  We write $\varphi(u_i(T)-0)$ (resp. $\varphi (u_i(T) +0)$) for its value at $u_i(T)$ considered as a point in $[u_{i-1}(T),u_i(T)]$ (resp.
 $[u_i(T), u_{i+1}(T)]$). We will use the convention that $\varphi (u_0(T) -0) = \varphi (u_d(T) +0) =0$.
\end{definition}
  
\begin{definition}\label{defboundary1}
Let $T$ be an i.e.m. with combinatorial data $\pi$, and let $\Sigma$ be the set of cycles
of  the associated permutation $\sigma$ of $\cS$. The {\it boundary operator} $\partial$ is the linear operator from $C^0({\bf u} (T) )$ to $\RS$ defined by
\begin{equation}\label{eqbound}
(\partial \varphi)_C := \sum _{0 \leq i \leq d, \; U_i \in C} (\varphi (u_i(T) - 0) - \varphi (u_i (T)+ 0)),
\end{equation}
for any $\varphi \in C^0({\bf u} (T) )$, $C \in \Sigma$.
\end{definition} 

\begin{remark}
The presentation of the permutation $\sigma$ is different from \cite{MMY2}. 
\end{remark}

\begin{remark}\label{trivboundary}
If $\varphi\in C^0({\bf u} (T) )$ satisfies $\varphi (u_i(T)+0)=\varphi(u_i(T)-0)$ for $0\le i\le d$ then $\partial\varphi =0$. 
\end{remark}

\begin{remark}\label{boundarysum}
Let $\varphi\in C^1({\bf u} (T) )$. One has 
$$\sum_{C\in\Sigma} (\partial\varphi)_C = \int_{u_0(T)}^{u_d(T)} D\varphi (x)dx\, . $$
\end{remark}

\begin{remark} \label{remboundary0} 
The intersection of every cycle of $\sigma$ with the ($d+1$)-subset $\{U_0,\ldots U_d \}$ of $\cS$ has at least two elements.
\end{remark}  

\begin{remark}\label{remhomology}
The name boundary operator is due to the following homological interpretation. The space 
$\Gamma(T)\subset  C^0({\bf u}(T))$ is naturally isomorphic to the first relative homology group $H_1(M_T,\Sigma,\Rset)$ of the translation surface $M_T$: the characteristic function of $\iat(T)$ corresponds to the homology class $[\zeta_{\alpha}]$ of the side $\zeta_{\alpha}$ (oriented rightwards) of the polygon giving rise to $M_T$. Through this identification, the restriction of the boundary operator to $\Gamma (T)$ is the usual boundary operator
$$\partial: H_1(M_T,\Sigma,\Rset) \longrightarrow H_0(\Sigma,\Rset) \simeq \RS .$$
\end{remark}

We recall the following statement from \cite{MMY2} (Proposition 3.2, p. 1597):

\begin{proposition}\label{propboundary}
Let $T$ be an  i.e.m.\ with combinatorial data $\pi$. 
\begin{enumerate}
\item Let $\psi\in C^0([u_0(T),u_d(T)])$. One has 
$\partial\psi = \partial (\psi\circ T)$.
\item The kernel $\Gamma_\partial (T)$ of the restriction of $\partial$ to $\Gamma (T)$ is the image of 
$\Omega (\pi)$; the image of this restriction is the  hyperplane
$\RSO:= \{x \in \RS,\; \sum_C x_C =0\}$.
\item The boundary operator $\partial\, :\, C^0({\bf u}(T)) \to \RS$ is onto.
\item Let $n\ge 0$, let $T(n)$ be the i.e.m.\ obtained from $T$ by $n$ steps of the Rauzy--Veech algorithm. 
For $\varphi\in C^0({\bf u} (T) )$ one has 
$$ \partial (S(0,n)\varphi )=\partial\varphi\, , $$
where the left--hand side boundary operator is defined using the combinatorial data $\pi (n)$ of $T(n)$.
\end{enumerate}
\end{proposition}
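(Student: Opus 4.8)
The plan is to prove Proposition~\ref{propboundary} by treating its four parts in a slightly permuted logical order, since (2) and (3) are where the real content lies and (1), (4) are essentially soft consequences.

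\textbf{Part (1).} First I would observe that for $\psi \in C^0([u_0(T),u_d(T)])$, the composition $\psi\circ T$ lies in $C^0({\bf u}(T))$ because $T$ is a translation on each $\iat(T)$, so one-sided limits of $\psi\circ T$ at the singularities $u_i(T)$ are the one-sided limits of $\psi$ at the corresponding images. The key is to track, for a fixed cycle $C\in\Sigma$, which one-sided values of $\psi$ appear in $(\partial(\psi\circ T))_C$ versus $(\partial\psi)_C$. Using the explicit description of the permutation $\sigma$ (which pairs a vertex $U_i$ of the top partition with the vertex $V_j$ of the bottom partition sitting at the same interval endpoint after gluing), each contribution $\psi(u_i(T)-0)-\psi(u_i(T)+0)$ on the $\psi\circ T$ side corresponds via $T$ to a telescoping pair on the $\psi$ side; summing over a cycle, the terms reorganize and the two sums agree. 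This is a bookkeeping argument on the combinatorics of $\sigma$, and I would present it by unwinding the definition of $\sigma$ from section~2.6 carefully. (Alternatively, remark~\ref{remhomology} says $\partial$ restricted to $\Gamma(T)$ is the topological boundary map; but here $\psi$ need not be in $\Gamma(T)$, so I prefer the direct computation. Note $\partial\psi$ itself is generally nonzero because $\psi$ is only continuous on $I(T)$, not periodic.)

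\textbf{Parts (2) and (3).} For (2), I would use the homological identification from remark~\ref{remhomology}: $\Gamma(T)\cong H_1(M_T,\Sigma,\Rset)$ with $\iat(T)\leftrightarrow[\zeta_\alpha]$, and under this identification $\partial|_{\Gamma(T)}$ is the usual relative-to-absolute boundary $H_1(M_T,\Sigma,\Rset)\to H_0(\Sigma,\Rset)$. The kernel of this map is exactly the image of $H_1(M_T,\Rset)$ in $H_1(M_T,\Sigma,\Rset)$, which by the computation recalled in section~2.2 (rank of $\Omega$ is $2g$, image of $\Omega$ is the absolute homology) is the image of $\Omega(\pi)$. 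For the image of $\partial|_{\Gamma(T)}$: the long exact sequence of the pair, together with $H_0(M_T,\Rset)\cong\Rset$ and connectedness, forces the image to be the kernel of $H_0(\Sigma,\Rset)\to H_0(M_T,\Rset)$, i.e.\ $\RSO$. I would also give the elementary bare-hands version: $\sum_C(\partial\varphi)_C = \sum_{i}(\varphi(u_i-0)-\varphi(u_i+0))$ telescopes to $0$ for $\varphi\in\Gamma(T)$ (using the convention $\varphi(u_0-0)=\varphi(u_d+0)=0$ and remark~\ref{boundarysum} with $D\varphi\equiv 0$), so the image lands in $\RSO$; a dimension count using $\dim\Gamma(T)=d$, $\dim\operatorname{im}\Omega = 2g$, and $d = 2g+s-1$ shows $\operatorname{rank}(\partial|_{\Gamma(T)}) = d - 2g = s-1 = \dim\RSO$, giving equality. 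For (3), onto-ness of $\partial$ on all of $C^0({\bf u}(T))$: given (2) the image already contains $\RSO$; it remains to produce one $\varphi$ with $\sum_C(\partial\varphi)_C\ne 0$, which is trivial — take any piecewise-constant-looking $\varphi$ that is continuous on $I(T)$ but with $\varphi(u_d-0)\ne 0 = \varphi(u_d+0)$, e.g.\ a single affine or constant bump on the last interval, and use remark~\ref{boundarysum}.

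\textbf{Part (4).} Finally, for the invariance $\partial(S(0,n)\varphi)=\partial\varphi$, I would reduce by the cocycle relation $S(0,n)=S(n-1,n)\circ S(0,n-1)$ to the single elementary Rauzy--Veech step, i.e.\ show $\partial(S(0,1)\varphi)=\partial\varphi$ where the right side uses $\pi$ and the left side uses $\wh\pi=\pi(1)$. Here $S(0,1)\varphi(x)=\varphi(x)$ or $\varphi(x)+\varphi(Tx)$ depending on whether the return time of $x\in\wh I$ to $\wh I$ under $T$ is $1$ or $2$; the return time is $2$ only on the one subinterval that gets ``cut and reinserted'' by the elementary step. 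I expect the main obstacle to be exactly this step: one must check that the new set of singularities $\{u_i(\wh T)\}$, the new cycle structure of $\sigma_{\wh\pi}$, and the values of $S(0,1)\varphi$ at the new singularities conspire so that the defining sum~(\ref{eqbound}) is unchanged cycle-by-cycle. The cleanest route is to invoke part (1): write $S(0,1)\varphi = \varphi|_{\wh I} + (\varphi\circ T)\cdot\mathbf 1_{\{r=2\}}$, and identify the second piece, after the relabeling of intervals, with the pullback by $T$ of a function to which (1) applies, so that its boundary (computed with $\wh\pi$) matches; the first piece matches because restricting to $\wh I$ only affects the endpoint $\wh u_d = \max(u_{d-1},v_{d-1})$, and the boundary contribution there is transferred correctly by the relabeling of $\sigma$. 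I would organize this as a short lemma comparing $\sigma_\pi$ and $\sigma_{\wh\pi}$ (they differ by a single transposition-type modification) and then read off~(\ref{eqbound}). An entirely equivalent and possibly shorter proof of (4): $S(0,n)$ maps $\Gamma(T)$-cosets compatibly and, for $\psi\in C^0([u_0(T),u_d(T)])$, Birkhoff summation gives $S(0,n)\psi = \psi$ on $I(n)$ up to composition with powers of $T$, so (4) follows from (1) applied iteratively; I would pick whichever exposition is tighter.
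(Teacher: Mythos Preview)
The paper does not actually prove this proposition: it is quoted verbatim from \cite{MMY2}, Proposition~3.2, with no argument given here. So there is no ``paper's own proof'' to compare against, and your proposal is in effect supplying what the present paper omits. Your overall strategy is correct and standard: the homological interpretation plus dimension count for (2), the easy extension to (3), and the reduction to a single Rauzy--Veech step for (4) are the right moves.

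Two places deserve comment. For (1), your sketch is more complicated than necessary: since $\psi$ is continuous on all of $[u_0,u_d]$, the only nonzero contributions to $(\partial\psi)_C$ come from $i=0$ and $i=d$ via the convention $\psi(u_0-0)=\psi(u_d+0)=0$. On the $\psi\circ T$ side, one-sided limits at $u_i$ become one-sided limits of $\psi$ at the corresponding $v_j$, and again only the extreme vertices $V_0=U_0$ and $V_d=U_d$ give nonzero terms. The cycle structure of $\sigma$ then matches these endpoint contributions directly, so the bookkeeping is short.

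For (4), your second proposed route is not correct as written: $S(0,n)\psi$ is a genuine Birkhoff sum $\sum_{j<r(x)}\psi(T^jx)$ with $r(x)$ depending on $x$, not ``$\psi$ up to composition with powers of $T$'', so you cannot simply invoke (1) termwise. Your first route --- reduce to a single elementary step and compare the cycle structures of $\sigma_\pi$ and $\sigma_{\wh\pi}$ --- is the right one. The decomposition $S(0,1)\varphi = \varphi|_{\wh I} + (\varphi\circ T)\cdot\mathbf 1_{\{r=2\}}$ is useful, but the verification that the boundary is preserved cycle-by-cycle does require writing down explicitly how the elementary step modifies $\sigma$ (it is a local modification near the vertices $U_{d-1},U_d,V_{d-1},V_d$); this is the only genuine computation and you should carry it out rather than leave it as a ``short lemma''.
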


\subsection{Symplecticity of the Kontsevich-Zorich cocycle}\label{ssKZsymplectic}

Let $T$ be an i.e.m.\ with no connection. We use the notations of section \ref{ssKZ}.
For $0\le m\le n$, one has (see \cite{Y4}, p. 28)

\begin{equation}\label{eqsymp}
B(m,n)\Omega (\pi (m))  {}^tB(m,n)= \Omega (\pi (n))\, . 
\end{equation}
The above relation has several consequences: 
\begin{itemize}
\item The operator $^tB(m,n)^{-1}$ maps the kernel ${\rm Ker}\,\Omega (\pi (m))$ of $\Omega (\pi (m))$ onto the kernel ${\rm Ker}\,\Omega (\pi (n))$ of $\Omega (\pi (n))$. 
\item The operator $B(m,n)$ maps the image ${\rm Im}\,\Omega (\pi (m))$ of $\Omega (\pi (m))$ onto the image ${\rm Im}\,\Omega (\pi (n))$ of $\Omega (\pi (n))$.
\item The formula 
$$<\Omega (\pi (m))v, \Omega (\pi (m))w>:=  {}^tv\Omega (\pi (m))w$$
defines a symplectic structure on ${\rm Im}\,\Omega (\pi (m))$, and similarly on ${\rm Im}\,\Omega (\pi (n))$.
The operator 
$B(m,n)$ is symplectic with respect to these structures. 
\end{itemize}

Moreover (see \cite{Y4}, p. 28), one can choose for 
every irreducible combinatorial data $\pi$ a basis of ${\rm Ker}\,\Omega (\pi )$
such that the matrix of  
$$ {}^tB(m,n)^{-1}\, : \, 
{\rm Ker}\,\Omega (\pi (m))\to {\rm Ker}\,\Omega (\pi (n))$$ 
w.r.t.\ the corresponding bases is the identity matrix. 

\section{The cohomological equation}\label{homological}

\subsection{Interval exchange maps of restricted Roth Type}\label{ssRoth}

We recall the diophantine condition on an i.e.m.\ introduced in \cite{MMY1} and slightly modified in\cite{MMY2}.

For a matrix $B = (B_{\alpha \beta})_{\alpha,\beta \in \A}$, we define 
$ ||B || := \max_{\alpha} \sum_{\beta} | B_{\alpha \beta}|$, which is the operator norm for the $\ell_{\infty}$ norm on $\RA$.

Let $T$ be an i.e.m, with combinatorial data $\pi$.  In the space $\Gamma(T) \simeq \RA$ of piecewise constant functions on $I(T)$, we define the hyperplane
$$ \Gamma_0(T) := \{ \chi \in \Gamma(T) ,\; \int \chi(x) \; dx = 0\}.$$

Assuming that  $T$ has no connection, we also define the stable subspace

$$ \Gamma_s(T) := \{ \chi \in \Gamma(T), \exists \,\sigma >0,\; ||B(0,n)\chi|| = \mathcal {O} (||B(0,n)||^{-\sigma}) \}.$$

As $\Gamma_s(T)$ is finite-dimensional, there exists an exponent $\sigma >0$ which works for every $\chi\in\Gamma_s(T)$. We fix such 
an exponent in the rest of the paper. 

The subspace $ \Gamma_s(T)$ is contained in ${\rm Im}\,\Omega (\pi )$, and is an isotropic subspace of this symplectic space.
We have $\Gamma_s(T(n)) = B(0,n)\Gamma_s(T)$ for $n \geq 0$.

\medskip

We introduce four conditions. The sequence $(n_k)$ in the first condition has been defined in section  \ref{ssKZ}.

\bigskip

\hspace{3mm} (a) \hspace{3mm} For all $\tau>0$, $||B(n_k, n_{k+1})|| = \mathcal {O} (||B(0,n_k)||^{\tau})$. \\

\hspace{3mm} (b) \hspace{3mm} There exists $\theta >0$ such that
$$||B(0,n)_{|\Gamma_0(T)}|| = \mathcal {O} (||B(0,n)||^{1-\theta})\,.$$

\hspace{3mm} (c) \hspace{3mm}  For $0 \leq m\leq n$, let 
\begin{eqnarray*}
B_s(m,n)&:& \Gamma_s(T(m)) \to \Gamma_s(T(n)) ,\\
  B_{\flat}(m,n)&: &\Gamma(T(m)) /\Gamma_s(T(m)) \to \Gamma(T(n)) /\Gamma_s(T(n))  
  \end{eqnarray*}

be the operators  induced by $B(m,n)$.
We ask that, for all $\tau >0$,
$$||B_s(m,n)||= \mathcal {O} (||B(0,n)||^{\tau}), \quad ||(B_{\flat}(m,n))^{-1}|| = \mathcal {O}
(||B(0, n)||^{\tau})\,.$$

\hspace{3mm} (d) \hspace{3mm} $\dim \Gamma_s (T) =g$.

\begin{definition}\label{defRoth2}
An i.e.m. $T$ with no connection is of {\it
restricted Roth type} if the four
conditions (a), (b), (c), (d) are satisfied. 
\end{definition}

\begin{remark}\label{remRoth0} 
In \cite{MMY1}, condition (a) had a slightly different (but equivalent) formulation. Interval exchange maps satisfying the first three conditions (a),(b),(c) were said to be of {\it Roth  type}.
\end{remark}

\begin{remark}\label{remRoth1}
 An i.e.m.\ $T$ with no connection satisfying condition (b)
  is uniquely ergodic.
\end{remark}
\begin{remark}\label{remRoth2}
For any combinatorial data, the set of i.e.m of restricted Roth type has full measure.
First, it is obvious that almost all i.e.m. have no connection. That condition (c) is almost surely satisfied is a consequence of Oseledets theorem applied to the Kontsevich-Zorich cocycle. Condition (d) follows from the hyperbolicity of the (restricted) Kontsevich-Zorich cocycle, proved by Forni (\cite {For2}). 
A proof that condition (a) has full measure is provided in \cite{MMY1}, but much better diophantine
estimates were later obtained in \cite{AGY}. Finally, the fact that condition (b) has full measure is a  consequence from the fact that the
larger Lyapunov exponent of the Kontsevich-Zorich cocycle is simple (see \cite{Ve4}).
\end{remark}

\subsection{Some consequences of the hyperbolicity of the Kontsevich-Zorich cocycle}\label{ssremarque}

Assume that $T$ satisfies condition (d) of the previous section. Then the subspace 
$\Gamma_s(T)$ is a lagrangian subspace of the symplectic space ${\rm Im}\,\Omega (\pi )$.
 Choose a basis $\mathfrak B_0$ of $\Gamma(T)$ such that  the first $g$ vectors of 
 $\mathfrak B_0$ form a basis of  $\Gamma_s(T)$ and the first $2g$ vectors of $\mathfrak B_0$
  form a symplectic basis of ${\rm Im}\,\Omega (\pi )$.
 
For each $n \geq 0$, choose a basis $\mathfrak B_n$ of $\Gamma (T(n))$ such that 

\begin{itemize}
\item the first $g$ vectors of $\mathfrak B_n$ form a basis  of $\Gamma_s (T(n))$;
\item the first $2g$ vectors of $\mathfrak B_n$ form a symplectic basis of 
${\rm Im}\,\Omega (\pi(n) )$;
\item consider the operator from $\Gamma(T) / {\rm Im}\,\Omega (\pi )$ to 
$\Gamma(T(n)) / {\rm Im}\,\Omega (\pi (n))$ induced by $B(0,n)$. Its 
matrix representation w.r.t.\ the bases given by the last  $(s-1)$
vectors of $\mathfrak B_0$ , $\mathfrak B_n$ is the identity matrix (see section \ref{ssKZsymplectic});
\item The sequence of bases $\mathfrak B_n$  has bounded distortion: there exists a constant $c>0$ independent of $n$ such that, 
for all $n\ge 0$, $a\in \Rset^{\mathfrak B_n}$
$$ c^{-1}\Vert a\Vert_\infty \le \Vert \sum_{\chi\in \mathfrak B_n}a_\chi\chi\Vert_\infty\le c\Vert a\Vert_\infty\; . $$
Achieving this is possible since the orthogonal group acts transitively on the set of lagrangian subspaces of $\Rset^{2n}$. 
\end{itemize}

For $m \le n$, the operator $B(m,n)$ sends ${\rm Im}\,\Omega (\pi(m) )$ onto ${\rm Im}\,\Omega (\pi(n) )$ and is symplectic for the natural symplectic forms on these two spaces. The matrix of this operator with respect to the bases $\mathfrak B_m$, $\mathfrak B_n$ has the block triangular form

$$ \left ( \begin{array} {ccc} b_s(m,n) & \star &\star\\ 0 & b_u(m,n)& \star\\ 0&0&\mathbf1 \end{array} \right ) $$

The block $ \left ( \begin{array} {cc}  b_u(m,n)& \star \\ 0&\mathbf1 \end{array} \right ) $ is the matrix of the operator  $B_{\flat}(m,n)$ ,  with respect to the bases induced by 
$\mathfrak B_m$, $\mathfrak B_n$. Let us write $ \left ( \begin{array} {cc}  b_u^{-1}(m,n)& \mathfrak b(m,n) \\ 0&\mathbf1 \end{array} \right ) $ for the inverse of this block.

From the symplecticity of the restriction of $B(m,n)$ to ${\rm Im}\,\Omega (\pi(m) )$ , one has

\begin{equation}\label{eqbu1}
 b_u^{-1}(m,n) = ^t b_s(m,n).
 \end{equation}

When $m=0$, this implies that we have, from the definition of $\Gamma_s(T)$

\begin{equation}\label{eqbu2}
 \Vert b_u^{-1}(0,n) \Vert \leq C \Vert B(0,n) \Vert^{-\sigma} .
 \end{equation}

The cocycle relation $ B(m,p) = B(n,p) B(m,n)$ ($m \leq n \leq p$) gives

\begin{equation}\label{eqbu3}
\mathfrak b(m,p) =  \mathfrak b(m,n) + ^t b_s(m,n) \mathfrak b(n,p) .
 \end{equation}

Taking $m=0$  this leads to

\begin{equation}\label{eqbu4}
\mathfrak b(0,p) =  \sum_{n=0}^{p-1}  \, ^t b_s(0,n) \mathfrak b(n,n+1) .
 \end{equation}

which gives $\Vert \mathfrak b(0,p)  \Vert \leq C$.

\smallskip

More generally, we also get 

\begin{equation}\label{eqbu5}
\mathfrak b(m,p) =  \sum_{n=m}^{p-1}  \, ^t b_s(m,n) \mathfrak b(n,n+1) .
 \end{equation}

Assuming the first half of condition (c) in the last section

$$ \Vert b_s(m,n) \Vert = \mathcal {O}(  \Vert B(0,n) \Vert^\tau ),\qquad \forall \tau >0, $$

we obtain

\begin{equation}\label{eqbu6}
\Vert \mathfrak b(m,p) \Vert = \mathcal {O}(  \sum_{n=m}^{p-1}  \Vert B(0,n) \Vert^\tau ) = 
\mathcal {O}(\Vert B(0,p) \Vert^\tau), \quad \forall \tau >0.
 \end{equation}

As we have also $ \Vert b_u^{-1}(m,p) \Vert = \Vert  ^t b_s(m,p) \Vert = \mathcal {O}(  \Vert B(0,p) \Vert^\tau )$, we conclude that 

\begin{proposition}\label{prophalfc}
When $T$ satisfies condition (d) of section \ref{ssRoth}, the first part of condition (c) implies the second part.
\end{proposition}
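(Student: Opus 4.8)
The plan is to obtain the second half of condition (c) as an essentially formal consequence of the block--triangular picture assembled in this section, the symplecticity identity \eqref{eqbu1}, and the estimate \eqref{eqbu6}. It is precisely here that condition (d) enters: it guarantees that $\Gamma_s(T)$ is a lagrangian subspace of $({\rm Im}\,\Omega(\pi),\langle\cdot,\cdot\rangle)$, which is exactly what makes the decomposition of $B(m,n)|_{{\rm Im}\,\Omega(\pi(m))}$ into the diagonal blocks $b_s(m,n)$, $b_u(m,n)$, $\mathbf 1$ available, and which forces the relation $b_u^{-1}(m,n)={}^t b_s(m,n)$ between the stable and unstable blocks.

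First I would reduce the claim to a bound on the two nontrivial blocks of the inverse of $B_\flat(m,n)$. By the bounded--distortion clause in the construction of the bases $\mathfrak B_n$, the operator norm of $(B_\flat(m,n))^{-1}$, computed for the canonical norms on $\Gamma(T(m))/\Gamma_s(T(m))$ and $\Gamma(T(n))/\Gamma_s(T(n))$, is comparable — up to a multiplicative constant independent of $m$ and $n$ — to the norm of the matrix $\left(\begin{array}{cc} b_u^{-1}(m,n) & \mathfrak b(m,n)\\ 0 & \mathbf 1\end{array}\right)$, hence to $\max\bigl(\|b_u^{-1}(m,n)\|,\|\mathfrak b(m,n)\|,1\bigr)$. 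So it is enough to show that $\|b_u^{-1}(m,n)\|$ and $\|\mathfrak b(m,n)\|$ are each $\mathcal{O}(\|B(0,n)\|^\tau)$ for every $\tau>0$.

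For the first, I would use \eqref{eqbu1}, $b_u^{-1}(m,n)={}^t b_s(m,n)$, together with the fact that transposition changes an operator norm only by a dimensional constant. Since $b_s(m,n)$ is the matrix of $B_s(m,n)$ with respect to the first $g$ vectors of $\mathfrak B_m$ and $\mathfrak B_n$, bounded distortion again gives $\|b_s(m,n)\|\asymp\|B_s(m,n)\|$, so the first half of condition (c) yields $\|b_u^{-1}(m,n)\|=\mathcal{O}(\|B(0,n)\|^\tau)$ for every $\tau>0$. For the second, I would simply invoke \eqref{eqbu6}: its derivation — from the telescoping formula \eqref{eqbu5} together with $\|b_s(0,k)\|=\mathcal{O}(\|B(0,k)\|^\tau)$ and the uniform bound on the one--step terms $\mathfrak b(k,k+1)$ — uses nothing beyond the first half of condition (c), and delivers $\|\mathfrak b(m,n)\|=\mathcal{O}(\|B(0,n)\|^\tau)$ for every $\tau>0$. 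Combining the two estimates proves the proposition.

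I do not expect a genuine obstacle: all the work has been front--loaded into the preceding discussion, and the proposition is a bookkeeping statement. The only two points deserving a word of care are (i) the systematic replacement of norms taken in the adapted bases $\mathfrak B_n$ by the intrinsic operator norms that appear in condition (c), which is exactly what the bounded--distortion property is for, and (ii) the absorption of the number of summands in \eqref{eqbu5} — at most $n$ — into an arbitrarily small power of $\|B(0,n)\|$, which is already incorporated in the second equality of \eqref{eqbu6}.
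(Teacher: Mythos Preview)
Your proposal is correct and follows essentially the same route as the paper: the proposition is stated at the end of section~\ref{ssremarque} as the formal conclusion of the preceding discussion, and your argument recapitulates exactly that discussion --- the block--triangular representation in the bases $\mathfrak B_n$, the symplectic identity \eqref{eqbu1} giving $\|b_u^{-1}(m,n)\|=\mathcal{O}(\|B(0,n)\|^\tau)$, and the telescoping estimate \eqref{eqbu6} for $\|\mathfrak b(m,n)\|$. Your explicit remarks on bounded distortion and on absorbing the number of summands are welcome clarifications of points the paper leaves implicit.
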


\subsection{Previous results on the cohomological equation}\label{ssprevious cohomological}

To state results on the cohomological equation, we consider H\"older regularity conditions, like in Appendix A of \cite{MMY2}, rather than the bounded variation setting of [MMY1] and of the main text of \cite{MMY2}.

\smallskip

\begin{definition}\label{defcohom1}
Let $T$ be an i.e.m.\ and let $r\geq 0$ be a real number. We denote by $C^r_{0} ({\bf u} (T))$ the kernel of the boundary operator $\partial$ in $\CruT$.
\end{definition}

\begin{theorem}\label{thmcohom1}
Let $T$ be an i.e.m. of restricted Roth type. Let 
$\Gamma_u(T)$ be a subspace of $\Gamma_\partial (T)$
which is supplementing $\Gamma_s(T)$ in $\Gamma_\partial (T)$. Let $r $ be a real
 number $>1$. There exist bounded linear operators $L_0: \varphi \mapsto \psi$ from 
 $C^r_{0} ({\bf u} (T))$ to $C^0([u_0(T),u_d(T)])$ and $L_1: \varphi \mapsto \chi$ from 
$C^r_{0} ({\bf u} (T))$ to $\Gamma_u(T)$ such that any $\varphi \in C^r_{0} ({\bf u} (T))$ satisfies
$$ \varphi = \chi + \psi \circ T - \psi\; , \;\; \int_{u_0(T)}^{u_d(T)}\psi (x)dx = 0\; . $$
\end{theorem}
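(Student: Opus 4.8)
The plan is to follow the renormalization strategy of \cite{MMY1,MMY2}: decompose the datum $\varphi \in C^r_0({\bf u}(T))$ along the Rauzy--Veech tower, solve the equation step by step, and control the resulting corrections and correctors using the hyperbolicity estimates \eqref{eqbu2}, \eqref{eqbu6} together with conditions (a)--(d). First I would observe that it suffices to construct $\chi \in \Gamma_u(T)$ so that $\varphi - \chi$ has a coboundary representative, i.e.\ so that the obstruction to solvability vanishes; the operator $L_1$ will then be defined by $\varphi \mapsto \chi$, and $L_0$ by $\varphi \mapsto \psi$, where $\psi$ is normalized to have zero mean. The linearity and boundedness of $L_0,L_1$ come for free once the estimates are uniform in $\varphi$ (all the constructions below are manifestly linear in $\varphi$), so the real content is the convergence of the telescoping series defining $\psi$.

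The key steps, in order, are as follows. (1) \emph{Special Birkhoff sums, piecewise $C^1$ part.} Apply the operators $S(0,n)$ to $\varphi$; using Proposition~\ref{propboundary}(4) the boundary condition $\partial\varphi=0$ is preserved, and by a summation-by-parts argument the derivative $DS(0,n)\varphi$ is essentially $B(0,n)$ applied to the piecewise-constant function $D\varphi$ plus a bounded term, so the decay of $\|B(0,n)_{|\Gamma_0(T)}\|$ from condition (b), combined with the fact that $\int D\varphi = \sum_C(\partial\varphi)_C = 0$ (Remark~\ref{boundarysum}), gives $\|S(0,n)\varphi\|_{C^0} = \mathcal O(|I(n)|^{\theta'})$ for some $\theta'>0$ --- this is the content of Theorem~\ref{thmcohom2}. (2) \emph{The piecewise-constant correction.} Split $\Gamma(T(n))$ using the basis $\mathfrak B_n$ of section~\ref{ssremarque}: the component of the ``mean value'' data in $\Gamma_s(T(n))$ contributes a convergent series (by \eqref{eqbu2}), the component in the unstable/central block must be killed by choosing $\chi\in\Gamma_u(T)$ appropriately, and here one uses \eqref{eqbu4}, \eqref{eqbu6} to see that the relevant series $\sum {}^t b_s(0,n)\,\mathfrak b(n,n+1)$ converges and defines $\chi$ as a bounded linear function of $\varphi$. (3) \emph{Reconstruction of $\psi$.} Having disposed of the obstruction, write $\psi$ as the telescoping sum over $n$ of the partial Birkhoff-sum corrections on each interval $I(n)$ of the renormalization; on the tower \eqref{eqpartition} the Birkhoff sum over a return orbit of length $B_\alpha(0,n)$ of the $C^1$-small function $S(0,n)(\varphi-\chi)$ is controlled by $B_\alpha(0,n)\cdot\|S(0,n)(\varphi-\chi)\|_{C^0}$, and condition (a) plus geometric decay of $|I(n)|$ makes this summable with a definite H\"older exponent $\bar\delta>0$. (4) \emph{H\"older continuity.} For $x,y$ in the same interval of some $I(n)$ with $|x-y|\asymp |I(n)|$, estimate $|\psi(x)-\psi(y)|$ by the tail of the series from level $n$ onward; the per-level bound is $\mathcal O(\|B(n_k,n_{k+1})\|\cdot|I(n)|^{\theta'})$ which, against $|x-y|^{\bar\delta}$, is bounded provided $\bar\delta < \theta'$ up to the sub-exponential loss from (a); then normalize $\int\psi=0$.

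The main obstacle I expect is step~(4), matching the geometric decay of $|I(n)|$ against both the polynomial-in-$\|B(0,n)\|$ losses coming from condition (a) (the $\|B(n_k,n_{k+1})\|=\mathcal O(\|B(0,n_k)\|^\tau)$ factor) and the $C^0$-growth of $S(0,n)$ on the subspace complementary to $\Gamma_0(T)$, so as to extract an honest positive H\"older exponent $\bar\delta$ rather than merely a modulus of continuity; this is exactly where one must be careful to treat points $x,y$ in ``special relative position'' first (consecutive or nearly-consecutive floors of a single renormalization tower) and only afterwards pass to arbitrary pairs by the space decomposition, since a general pair may require comparing contributions from two different towers. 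A secondary technical point is that in the range $r>2$ one wants a better exponent, which forces a more refined estimate on $DS(0,n)\varphi$ exploiting the full $C^r$ regularity (Hölder continuity of the derivative with exponent $r-1$), handled separately at the end; and throughout one must check that the basis choices $\mathfrak B_n$ with bounded distortion do not introduce any $n$-dependent constants, which is guaranteed by the transitivity of $O(2g)$ on Lagrangians noted in section~\ref{ssremarque}.
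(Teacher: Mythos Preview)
Your overall architecture (renormalize, construct the correction $\chi$, build $\psi$, control regularity) matches the paper's, but several specifics in steps (1)--(3) are off, and the route to $\psi$ differs from what the paper actually does.

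In step (1), $D\varphi$ is \emph{not} piecewise constant --- it is only piecewise $C^{r-1}$ --- so there is no sense in which $DS(0,n)\varphi$ is ``$B(0,n)$ applied to the piecewise-constant function $D\varphi$''. The correct mechanism (Lemma~\ref{lemHolder2}) is that $D\varphi$ has mean zero (by Remark~\ref{boundarysum} and $\partial\varphi=0$), and condition~(b) then controls $\|S(0,n_k)D\varphi\|_{C^0}$ for $C^\rho$ data of mean zero. More seriously, the claimed bound $\|S(0,n)\varphi\|_{C^0}=\mathcal O(|I(n)|^{\theta'})$ is false for $\varphi$ itself: if $\varphi$ has a nonzero component along $\Gamma_u(T)$ then $S(0,n)\varphi$ grows at least like $\|B(0,n)\|^{\sigma}$ by \eqref{eqbu2}. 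The decay holds only for $\varphi-\chi$, and only after $\chi$ has been constructed; this is Lemma~\ref{lemHolder3}, not Theorem~\ref{thmcohom2} (which, for $C^1$ data, gives sub-polynomial \emph{growth}, not decay). So steps (1) and (2) must be interchanged, and the correct output of their combination is Lemma~\ref{lemHolder3}.

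For step (3), the paper does not build $\psi$ as an explicit telescoping series. Instead it uses the time decomposition (section~\ref{sstime}) together with Lemma~\ref{lemHolder3} to show that all Birkhoff sums of $\varphi-\chi$ are uniformly bounded by $C\|\varphi\|_{C^r}$, and then invokes the Gottschalk--Hedlund theorem (applied to a continuous model of $T$) to obtain a bounded $\psi$ with $\varphi-\chi=\psi\circ T-\psi$; continuity of $\psi$ on $[u_0(T),u_d(T)]$ is taken from \cite{MMY2}. This is the step you are missing: without Gottschalk--Hedlund (or an equivalent argument exploiting minimality), your ``telescoping sum'' does not obviously converge to a continuous function, since the naive candidate $-\sum_{j\ge 0}(\varphi-\chi)\circ T^j$ need not converge pointwise even when its partial sums are bounded.

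Your step (4) is aimed at Theorem~\ref{thmHolder1}, not Theorem~\ref{thmcohom1}; for the record, the paper's H\"older argument is also different from what you sketch. It does not estimate the tail of a series defining $\psi$. Rather, once $\psi$ exists, it studies the vectors $V_\alpha(k):=\Delta(\iat(T(n_k)))$ of increments of $\psi$ over the renormalization intervals, derives a recursion $V(k+1)\approx ({}^tB(n_k,n_{k+1}))^{-1}V(k)$ (Lemma~\ref{lemHolder4}), and uses the symplectic structure of the cocycle together with $V(k)\to 0$ to force $\|V(k)\|\le C\|B(0,n_k)\|^{-\delta_2}$ (Lemma~\ref{lemHolder5}); this is then propagated to all intervals of the partition $\mathcal P(k)$ (Lemma~\ref{lemHolder6}) and combined with the space decomposition of section~\ref{ssspace}.
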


The proof is a combination of the proof of Theorem A in \cite{MMY1}, of 
Remark 3.11 of \cite{MMY2}, and of Appendix A of the same paper. The main steps of the proof will be recalled in the next sections, where a stronger result, stated below, is proved.

\begin{remark}
By Proposition \ref{propboundary} part (a), the coboundary of a continuous function belongs to the kernel of the boundary 
operator.
\end{remark}

\begin{remark}
The operators $L_0$ and $L_1$ are uniquely defined by the conclusions of the theorem. For $L_1$, it follows from Remark \ref{remarkchi} below. Then, 
as $T$ is minimal, $L_0$ is also uniquely defined. 
\end{remark}

\subsection{H\"older regularity of the solutions of the cohomological equation}\label{ssHolder}

Our main result is an improvement on the regularity of the solutions of the cohomological equation.

\smallskip

\begin{theorem} \label{thmHolder1}
In the setting of Theorem \ref{thmcohom1}, there exists $\bar \delta >0$ such that 
the operator $L_0$ takes its value in the space $C^{\bar \delta}([u_0(T),u_d(T)])$ of 
H\"older continuous functions of exponent $\bar \delta$. The operator $L_0$ is bounded from $C_0^r({\bf u}(T))$ to $C^{\bar \delta}([u_0(T),u_d(T)])$. The exponent $\bar \delta$ depends only  on $r$ and the constants $\theta$, 
$\sigma$ appearing in section \ref{ssRoth}.
\end{theorem}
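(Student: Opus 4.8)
\textbf{Proof strategy for Theorem \ref{thmHolder1}.} The plan is to upgrade the construction of $L_0$ from Theorem \ref{thmcohom1} by tracking quantitatively the modulus of continuity of the solution $\psi$, using the time and space decompositions announced in sections 3.7--3.10. The starting point is Theorem \ref{thmcohom2}: for $\varphi\in C^1_0({\bf u}(T))$ one finds a piecewise constant $\chi\in\Gamma_u(T)$ so that the special Birkhoff sums $S(0,n)(\varphi-\chi)$ grow slower than any power of $\Vert B(0,n)\Vert$. Combined with the estimates of section 3.6 for piecewise $C^r$ data (which should give, on each interval of the partition at level $n$, a bound of the form $\Vert S(0,n)(\varphi-\chi)\Vert_{C^0} = \mathcal O(\Vert B(0,n)\Vert^{-\rho})$ for some $\rho>0$ depending on $r-1$, $\theta$, $\sigma$, since the $C^r$ regularity with $r>1$ buys a genuine geometric gain over the subpolynomial bound of the $C^1$ case), one controls the increments of $\psi$ along orbit segments governed by the Rauzy--Veech dynamics.

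The key steps, in order, are as follows. First, I would fix two points $x,y$ in $I(T)$ and find the largest level $n$ such that $x$ and $y$ lie in the same atom of the partition \eqref{eqpartition} at level $n$, i.e.\ in ``special relative position'' in the sense of section 3.9: $y = T(0)^j(z)$, $x = T(0)^{j'}(z)$ for points $z,z'$ in a common interval $I_\alpha^t(n)$. The distance $|x-y|$ is then comparable to $\Vert B(0,n)\Vert^{-1}$ up to subpolynomial factors, since the atoms at level $n$ have lengths controlled by the columns of $B(0,n)$ and the Roth conditions (a), (b) prevent these lengths from being too unbalanced. Second, I would express $\psi(x)-\psi(y)$ as a telescoping sum of special Birkhoff sums $S(m,m+1)(\varphi-\chi)$ over the levels $m$ from $n$ down to $0$ (this is exactly the content of the time/space decomposition relating general Birkhoff sums to special ones), plus a boundedly-many-terms remainder coming from the within-atom part. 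Third, I would sum the geometric-type bounds $\Vert S(0,m)(\varphi-\chi)\Vert = \mathcal O(\Vert B(0,m)\Vert^{-\rho})$ against the return-time factors, and use that along the sequence $(n_k)$ one has uniform expansion of $\Vert B(0,n_k)\Vert$ (with controlled distortion within each block by condition (a)), so the sum over $m\ge n$ converges and is $\mathcal O(\Vert B(0,n)\Vert^{-\rho'})$ for any $\rho'<\rho$. Setting $\bar\delta$ slightly smaller than $\rho'$, this yields $|\psi(x)-\psi(y)| = \mathcal O(\Vert B(0,n)\Vert^{-\bar\delta}) = \mathcal O(|x-y|^{\bar\delta})$ for $x,y$ in special relative position. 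Fourth, I would remove the special-position hypothesis via the space decomposition of section 3.10: an arbitrary pair $x,y$ with $|x-y|$ small is joined by a bounded number of intermediate points each consecutive pair of which is in special relative position at a comparable scale, and the H\"older bound is additive over such a chain.

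The boundedness of $L_0$ from $C^r_0({\bf u}(T))$ to $C^{\bar\delta}$ comes for free because every estimate above is linear in $\varphi$ with constants depending only on $T$ (through $\theta,\sigma$ and the fixed bases $\mathfrak B_n$ of bounded distortion from section 3.2) and on $r$; in particular $\chi = L_1\varphi$ and $\psi = L_0\varphi$ depend linearly and boundedly on $\varphi$ by Theorem \ref{thmcohom1}, and the modulus-of-continuity estimate only uses the operator norms.

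\textbf{Main obstacle.} The delicate point is the second and third steps: making the bookkeeping of the time/space decomposition precise enough that the error terms from changing levels (the ``$\star$'' off-diagonal blocks in the matrices of section 3.2, and the boundary contributions) are genuinely controlled, and verifying that the interval lengths at level $n$ really are comparable to $\Vert B(0,n)\Vert^{-1}$ up to factors $\Vert B(0,n)\Vert^{\tau}$ uniformly in the atom --- this is where conditions (a) and (b) are essential and where one must be careful not to lose the gain $\rho$ provided by the $C^r$, $r>1$, regularity. Getting $\bar\delta>0$ (rather than merely continuity) hinges entirely on this geometric gain surviving the summation over scales, so the quantitative version of the section 3.6 estimates, and their interaction with the subpolynomial losses in (a) and (c), is the crux. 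The restriction $\bar\delta<r-1$ is then a transparent consequence of the fact that the per-scale gain cannot exceed what one derivative of extra regularity provides.
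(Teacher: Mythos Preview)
Your outline captures the overall architecture correctly --- the $C^r$ special Birkhoff sum estimate (Lemma \ref{lemHolder3}), the time decomposition to get uniform boundedness and hence a continuous $\psi$ via Gottschalk--Hedlund, and a space decomposition to pass from a special estimate to the general H\"older bound --- but there is a genuine gap at your second and third steps, precisely where you yourself locate the ``main obstacle''.

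The difficulty is that $\psi(x)-\psi(y)$ for nearby $x,y$ is \emph{not} a Birkhoff sum of $\varphi-\chi$, and there is no direct telescoping expression for it in terms of special Birkhoff sums. The relation $\varphi = \psi\circ T - \psi$ controls $\psi(T^N x) - \psi(x)$ (differences \emph{along} an orbit), not $\psi(x)-\psi(y)$ (differences \emph{across} an atom); your phrasing ``$y = T(0)^j(z)$, $x = T(0)^{j'}(z)$'' suggests $x,y$ lie on the same orbit, which generically they do not. What the paper actually does for the special estimate (section \ref{ssspecialHolder}) is quite different: it forms the vector $V(k)\in\RA$ with components $V_\alpha(k)=\Delta(I_\alpha^t(T(n_k))) := \psi(\text{right endpoint}) - \psi(\text{left endpoint})$, and shows (Lemma \ref{lemHolder4}, using the partition \eqref{eqpartition}) that this vector evolves approximately under the \emph{transpose inverse} cocycle,
\[
\Vert V(k+1) - ({}^tB(n_k,n_{k+1}))^{-1} V(k)\Vert \leq C\,\Vert B(0,n_k)\Vert^{-\delta_1/2}\Vert\varphi\Vert_{C^r}.
\]
The required decay $\Vert V(k)\Vert \leq C\Vert B(0,n_k)\Vert^{-\delta_2}$ then comes from a separate lemma (Lemma \ref{lemHolder5}) which exploits the symplectic relation \eqref{eqsymp} to convert the ${}^tB^{-1}$-recursion into a $B$-recursion on $\bar W(k)=\Omega(\pi(n_k))W(k)$, and then splits into three pieces: the class mod $\Gamma_s$ (controlled via the nondegenerate pairing between $\Gamma_s$ and $\Gamma_\partial/\Gamma_s$ together with $W(k)\to 0$), the $\Gamma_s$-component (controlled as in the end of Lemma \ref{lemHolder3}), and the $\mathrm{Ker}\,\Omega$-component (controlled because the cocycle acts trivially there, cf.\ section \ref{ssKZsymplectic}). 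None of this structure --- the transpose-inverse cocycle on increments, the symplectic transfer, the three-part splitting --- appears in your plan, and without it the special H\"older estimate \eqref{eqHolderspecial} is not established.

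Your step 4 is also not what the paper does. The space decomposition of $(x_-,x_+)$ in section \ref{ssspace} is not a bounded chain of points in pairwise special position but an \emph{infinite} decomposition \eqref{eqspacedec} into atoms of $\mathcal P(\ell)$ for all $\ell\geq k$, with at most $\Vert {}^tB(n_{\ell-1},n_\ell)\Vert$ atoms at each level; one sums $\Delta(J)$ over all these atoms using Lemma \ref{lemHolder6} (which transfers the base-interval estimate \eqref{eqHolderspecial} to every floor $T^N(I_\alpha^t(T(n_k)))$ of the towers via a genuine Birkhoff-sum argument --- but for $D\varphi$, not for $\varphi$, through \eqref{eqgenHolder1}), and the decay in $\ell$ makes the series converge.
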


The rest of the paper is devoted to the proof of this theorem.

\subsection{Estimates for special Birkhoff sums with $C^1$ data}\label{ssC1}

The following result is of independent interest. 

\begin{theorem}\label{thmcohom2}
Let $T$ be an i.e.m. which satisfies conditions (a), (c), (d) of section \ref{ssRoth}. Let  
$\Gamma_u(T)$ be a subspace of $\Gamma_\partial (T)$
which is supplementing $\Gamma_s(T)$ in $\Gamma_\partial (T)$. The operator $L_1: \varphi \mapsto \chi$ of Theorem \ref{thmcohom1} extends to a bounded operator
 from $C^1_{0} ({\bf u} (T))$ to $\Gamma_u(T)$. For $\varphi \in C^1_{0} ({\bf u} (T))$, the function $\chi= L_1(\varphi) \in \Gamma_u(T)$ is characterized by the property that the special Birkhoff sums of $\varphi -\chi$ satisfy, for any $\tau >0$

 $$ \Vert S(0,n) (\varphi -\chi) \Vert_{C^0} \leq C(\tau) \Vert \varphi \Vert_{C^1} \Vert B(0,n)\Vert^\tau .$$ 

\end{theorem}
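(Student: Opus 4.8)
\textbf{Proof proposal for Theorem \ref{thmcohom2}.}

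The plan is to follow the standard strategy of proving estimates for special Birkhoff sums along the Rauzy--Veech tower, working simultaneously with the piecewise-$C^1$ datum and its piecewise-constant ``derivative part''. First I would decompose $\varphi \in C^1_0({\bf u}(T))$ as $\varphi = \varphi_1 + \varphi_0$ where $\varphi_0 \in \Gamma(T)$ is the piecewise constant function whose value on $\iat(T)$ equals the mean of $D\varphi$ on the corresponding interval (or simply the constant part), so that the ``nonconstant'' piece $\varphi_1$ has the property that its special Birkhoff sums are controlled in terms of $\Vert D\varphi\Vert_{C^0}$ and the \emph{lengths} of the floors of the tower, which shrink geometrically (by bounded distortion and the definition of $(n_k)$, together with condition (a)). Concretely, for a piecewise $C^1$ function with zero mean of the derivative on each interval of continuity of $T(n)$, the special Birkhoff sum $S(0,n)\varphi_1$ stays bounded in $C^0$; the new contributions at each step come from integrating $D\varphi$ over a single floor, whose total length is $O(\Vert B(0,n)\Vert^{-1+\tau})$ for any $\tau>0$ under condition (a). Summing the geometric-type series along $n$ gives $\Vert S(0,n)\varphi_1\Vert_{C^0} = O(\Vert\varphi\Vert_{C^1})$, uniformly in $n$.

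Next I would handle the piecewise constant part. Here $S(0,n)$ restricted to $\Gamma(T)$ is exactly the KZ-cocycle $B(0,n)$, so we must choose $\chi \in \Gamma_u(T)$ such that $\varphi_0 - \chi$ (more precisely, the class of $\varphi_0$ modulo the correction) has slowly growing images under $B(0,n)$. Using condition (d), the space $\Gamma_\partial(T) = \Gamma_s(T) \oplus \Gamma_u(T)$ and the block-triangular description of $B(m,n)$ on ${\rm Im}\,\Omega(\pi)$ from section \ref{ssremarque}: write $\varphi_0$'s projection to $\Gamma_\partial(T)$ in the basis $\mathfrak B_0$; the component in $\Gamma_s(T)$ already has $B(0,n)$-images that are $O(\Vert B(0,n)\Vert^{-\sigma})$, hence harmless; the component ``along $\Gamma_u(T)$'' grows and must be cancelled. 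The point is that the \emph{right} definition of $\chi$ is forced by requiring that, after subtracting $\chi$, the remaining Birkhoff sums do not grow faster than $\Vert B(0,n)\Vert^\tau$. Because the $\Gamma_u$-block of the cocycle has an inverse $b_u^{-1}$ with norm $O(\Vert B(0,n)\Vert^{-\sigma})$ by \eqref{eqbu2}, and using formula \eqref{eqbu5}--\eqref{eqbu6} for the off-diagonal term $\mathfrak b(0,n)$, one can solve a telescoping/limit equation: $\chi$ is obtained as the limit (which converges by the estimates \eqref{eqbu4}, \eqref{eqbu6}) of the ``correction needed at step $n$'', i.e. roughly $\chi = -\lim_n b_u^{-1}(0,n)\big(\text{$\Gamma_u$-part of }B(0,n)\varphi_0 + \text{contribution from }\varphi_1\big)$, and one checks this limit exists and lies in $\Gamma_u(T)$. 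This reproduces and extends the operator $L_1$ of Theorem \ref{thmcohom1} to the $C^1$ setting, and shows $L_1$ is bounded from $C^1_0({\bf u}(T))$ to $\Gamma_u(T)$.

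Then I would verify the characterization: for this choice of $\chi$, combining the bound $\Vert S(0,n)\varphi_1\Vert_{C^0}=O(\Vert\varphi\Vert_{C^1})$ with the cocycle estimates for $S(0,n)(\varphi_0-\chi)$ on $\Gamma(T)$ gives $\Vert S(0,n)(\varphi-\chi)\Vert_{C^0}\le C(\tau)\Vert\varphi\Vert_{C^1}\Vert B(0,n)\Vert^\tau$ for every $\tau>0$; conversely, if $\chi'\in\Gamma_u(T)$ also makes the special Birkhoff sums grow sub-polynomially, then $S(0,n)(\chi-\chi')=B(0,n)(\chi-\chi')$ is sub-polynomial, forcing $\chi-\chi'\in\Gamma_s(T)\cap\Gamma_u(T)=\{0\}$ (here one uses that condition (b), or the spectral gap implicit in the restricted Roth setup, prevents a nonzero vector of $\Gamma_u(T)$ from having sub-polynomially growing $B(0,n)$-orbit; alternatively this uniqueness is already recorded as Remark \ref{remarkchi}). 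The boundary-operator hypothesis $\varphi\in C^1_0({\bf u}(T))$ enters through Proposition \ref{propboundary}: it guarantees $\partial(S(0,n)\varphi)=\partial\varphi=0$, which is what keeps the Birkhoff sums inside a space where the splitting $\Gamma_s\oplus\Gamma_u$ of $\Gamma_\partial$ is available, and in particular ensures $\varphi_0$ projects into $\Gamma_\partial(T)={\rm Im}\,\Omega(\pi)$ modulo manageable terms.

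The main obstacle, I expect, is the first part: showing that the special Birkhoff sums of the ``nonconstant'' piece $\varphi_1$ of a piecewise $C^1$ function remain bounded (not merely sub-polynomial) in $C^0$ norm. This requires carefully bounding, at each renormalization step $n$, the oscillation added to the Birkhoff sum by the single new floor of the Rohlin tower; one controls this by $\Vert D\varphi\Vert_{C^0}$ times the length of that floor, and then must sum these contributions over all $n$. The subtlety is that between the times $n_k$ the tower heights $\Vert B(0,n)\Vert$ need not grow, so one must group steps into blocks $[n_k,n_{k+1})$ and use condition (a) to absorb $\Vert B(n_k,n_{k+1})\Vert$ into an arbitrarily small power of $\Vert B(0,n_k)\Vert$, while the floor lengths over a full block still decay like $\Vert B(0,n_k)\Vert^{-1}$ up to $\tau$-losses; the resulting series $\sum_k \Vert B(0,n_k)\Vert^{-1+\tau}$ converges because $\Vert B(0,n_k)\Vert$ grows at least geometrically in $k$. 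Getting this bookkeeping exactly right — and in a form uniform in $\varphi$, yielding boundedness of the operator — is the technical heart of the argument.
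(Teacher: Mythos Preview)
Your decomposition $\varphi = \varphi_1 + \varphi_0$ with $\varphi_0 \in \Gamma(T)$ fixed once and for all at level $0$ is the gap. The claim that $\Vert S(0,n)\varphi_1\Vert_{C^0}$ stays bounded (or even grows sub-polynomially) is false in genus $g\ge 2$: the obstruction to boundedness of Birkhoff sums is given by $g$ invariant distributions of which only one (the Lebesgue integral) is a measure; the others are genuinely distributional functionals on $C^1$ and do \emph{not} vanish on a $C^1$ function merely because its piecewise constant part has been removed. Concretely, subtracting $\varphi_0$ kills at most a $d$-dimensional piece of the obstruction space, while the correction $\chi\in\Gamma_u(T)$ you are after depends on the pairing of the full function $\varphi$ with these distributions. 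Your telescoping estimate ``integrate $D\varphi$ over a single floor of length $O(\Vert B(0,n)\Vert^{-1+\tau})$'' only controls the \emph{oscillation} of $S(0,n)\varphi_1$ on each $I_\alpha^t(T(n))$; it says nothing about the \emph{value}, and indeed the value can grow like $\Vert B(0,n)\Vert^{\theta}$ for some positive exponent.

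The paper avoids this by defining a piecewise-constant approximation $\Phi(n)\in\Gamma_\partial(T(n))$ at \emph{every} level $n$ (chosen so that $S(0,n)\varphi-\Phi(n)$ is continuous and vanishes at the endpoints). The oscillation estimate you describe then yields the \emph{recursive} bound $\Vert \Phi(n_{\ell+1})-B(n_\ell,n_{\ell+1})\Phi(n_\ell)\Vert \le C_\tau \Vert D\varphi\Vert_{C^0}\Vert B(0,n_\ell)\Vert^\tau$, and $\chi$ is constructed as the unique element of $\Gamma_u(T)$ congruent mod $\Gamma_s(T)$ to the convergent series $\Phi(0)+\sum_{\ell>0}B_\flat^{-1}(0,n_\ell)\Lambda_\ell$, where $\Lambda_\ell$ is the class of that difference. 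Convergence is obtained by splitting the series according to whether $\Vert B(0,n_\ell)\Vert^{\sigma/2}\lessgtr\Vert B(0,n_k)\Vert$ and using condition (c) on one part and the estimate \eqref{eqbu2} (via symplecticity) on the other. A second pass, writing $S(0,n_k)\varphi=\varphi_k-\chi_k$ with $\chi_k\in\Gamma_s(T(n_k))$, upgrades the estimate from mod $\Gamma_s$ to the full space. In short, the piecewise constant approximation has to be \emph{renormalized along with the dynamics}; fixing it at level $0$ loses exactly the information that determines $\chi$.
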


\begin{remark}\label{remarkchi} The function $\chi\in\Gamma_u(T)$ is characterized by the inequality of the theorem. 
We have to show that if a function $\chi\in\Gamma_u(T)$  satisfies 
$$ \Vert B(0,n)\chi\Vert\le C_\tau\Vert B(0,n)\Vert^\tau\Vert\chi\Vert\; , $$
for all $\tau >0$, then $\chi =0$. 

Indeed, choose bases $\mathfrak B_n$ of $\Gamma (T(n))$ as in section \ref{ssremarque}, and write $\chi = 
\left ( \begin{array} {c}  \star \\  \chi_u \\ 0 \end{array} \right ) $ 
in $\mathfrak B_0$. Then the image $B(0,n)\chi$ has the 
form $\left ( \begin{array} {c}  \star \\  b_u(0,n)\chi_u \\ 0 \end{array} \right ) $ in $\mathfrak B_n$. 
Therefore, we have, for all $\tau >0$ 
$$ \Vert b_u(0,n)\chi_u \Vert \le C_\tau \Vert B(0,n)\Vert^\tau \Vert\chi\Vert\; . $$
From inequality (\ref{eqbu2}), we must have $\chi_u=0\;$ thus, as $\chi\in\Gamma_u(T)$, $\chi=0$. 
\end{remark}

\begin{remark}\label{remtime} The inequality of the theorem for special Birkhoff sums implies a similar inequality for general Birkhoff sums: for all 
$\tau >0$, 

$$ \Vert \sum_{j=0}^{N-1} (\varphi -\chi) \circ T^j \Vert_{C^0} \leq C(\tau) \Vert \varphi \Vert_{C^1} N^\tau .$$

This follows from the time decomposition in section \ref{sstime}. 
\end{remark}

\begin{proof} The proof of the Theorem \ref{thmcohom2} is a variant of the proof of the Theorem at p.\ 845 of \cite{MMY1}.

Recall the following result from \cite{MMY1}, p. 835.
\begin{lemma}\label{lemHolder1}
Let $T$ be an i.e.m.\ with no connection and let $n\ge m \ge 0$. 
\begin{enumerate}
\item One has 
$$\min_{\alpha} |\iat(T(n))| \leq |I(T(m))| || B(m,n)||^{-1} \leq d
\max_{\alpha}  |\iat(T(n))|\; .$$

\item Assume that $T$ satisfies condition (a)  of section \ref{ssRoth}. Then, for any $\tau >0$, there exists $C_{\tau}$ such that
$$\max_{\alpha}  |\iat(T(n))| \leq C_{\tau} \,|| B(0,n) ||^{\tau} \min_{\alpha} |\iat(T(n))| 
 .$$
\end{enumerate}
\end{lemma}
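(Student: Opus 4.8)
For part (1) I would read everything off the tower partition \eqref{eqpartition}. Since $T(m)$ preserves Lebesgue measure, every set $T(m)^j(\iat(T(n)))$ appearing there has length $|\iat(T(n))|$, so summing lengths over the partition gives the identity
$$|I(T(m))| \;=\; \sum_{\alpha\in\A} B_\alpha(m,n)\,|\iat(T(n))|,\qquad B_\alpha(m,n)=\sum_\beta B_{\alpha\beta}(m,n).$$
As $B(m,n)$ has nonnegative entries, $\|B(m,n)\|=\max_\alpha B_\alpha(m,n)$. Bounding $B_\alpha(m,n)\le\|B(m,n)\|$ together with $\sum_\alpha|\iat(T(n))|\le d\max_\alpha|\iat(T(n))|$ in the identity yields the right-hand inequality; bounding $|\iat(T(n))|\ge\min_\alpha|\iat(T(n))|$ together with $\sum_\alpha B_\alpha(m,n)\ge\max_\alpha B_\alpha(m,n)=\|B(m,n)\|$ yields the left-hand one.

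For part (2) I would first record the companion of \eqref{eqpartition}: grouping the floors of the towers over $I(T(n'))$ according to which interval $\iat(T(m))$ they fall in gives the length relation $\lambda(m)={}^tB(m,n')\,\lambda(n')$ for $m\le n'$, where $\lambda(n)=\big(|\iat(T(n))|\big)_{\alpha\in\A}$. Now fix $n$ and let $k$ be the index with $n_k\le n<n_{k+1}$. The key point is that $B(n,n_{k+2})$ has all entries strictly positive: by the cocycle relation $B(n,n_{k+2})=B(n_{k+1},n_{k+2})\,B(n,n_{k+1})$, the first factor is strictly positive by the definition of the sequence $(n_k)$, while the second factor has no zero column because $\lambda(n)={}^tB(n,n_{k+1})\lambda(n_{k+1})$ has all entries positive (they are the lengths $|\iat(T(n))|$), which forces each column of $B(n,n_{k+1})$ to contain a nonzero entry; and a strictly positive matrix times a nonnegative matrix with no zero column is strictly positive. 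Writing $\lambda(n)={}^tB(n,n_{k+2})\lambda(n_{k+2})$ with $\lambda(n_{k+2})$ strictly positive and every entry of $B(n,n_{k+2})$ lying between $1$ and $\|B(n,n_{k+2})\|$, the elementary estimate $\max_\alpha\lambda_\alpha(n)\le\|B(n,n_{k+2})\|\,\min_\alpha\lambda_\alpha(n)$ drops out.

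It then remains to bound $\|B(n,n_{k+2})\|$ by an arbitrarily small power of $\|B(0,n)\|$. Submultiplicativity gives $\|B(n,n_{k+2})\|\le\|B(n,n_{k+1})\|\,\|B(n_{k+1},n_{k+2})\|$, and $\|B(n,n_{k+1})\|\le\|B(n_k,n_{k+1})\|$, using the factorization $B(n_k,n_{k+1})=B(n,n_{k+1})B(n_k,n)$ together with the fact that $B(n_k,n)$ has all row sums $\ge1$. Condition~(a) bounds $\|B(n_k,n_{k+1})\|$ by $\mathcal{O}(\|B(0,n_k)\|^\tau)$ and $\|B(n_{k+1},n_{k+2})\|$ by $\mathcal{O}(\|B(0,n_{k+1})\|^\tau)$ for every $\tau>0$; inserting $\|B(0,n_{k+1})\|\le\|B(n_k,n_{k+1})\|\,\|B(0,n_k)\|$ and the monotonicity of $m\mapsto\|B(0,m)\|$ (so $\|B(0,n_k)\|\le\|B(0,n)\|$), all factors combine to $\mathcal{O}(\|B(0,n)\|^{2\tau+\tau^2})$. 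Choosing $\tau$ small relative to the target exponent finishes the proof, with a constant depending only on that exponent (through condition~(a)) and on $d$.

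The only genuinely substantive point is the positivity of $B(n,n_{k+2})$: the block $B(n_k,n)$ between a "good time" $n_k$ and an arbitrary later time $n$ is not itself under control, but sandwiching the remaining piece $B(n,n_{k+1})$ with one further good block $B(n_{k+1},n_{k+2})$ restores positivity, and the no-zero-column property needed here is immediate from the length identity rather than requiring a separate appeal to minimality. Everything else is bookkeeping; the one thing to watch is that the constant in (2) depends only on $\tau$ and not on $n$, which is guaranteed because condition~(a) is uniform in $k$.
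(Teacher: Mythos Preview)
Your argument is correct. Note, however, that the paper does not actually prove this lemma: it simply recalls it from \cite{MMY1}, p.~835. Your proof is the natural one and matches the approach of that reference. Part~(1) is an immediate consequence of the tower partition \eqref{eqpartition}, exactly as you wrote it. For part~(2), the key mechanism---passing forward from an arbitrary time $n$ with $n_k\le n<n_{k+1}$ to a time at which the cocycle block becomes strictly positive, then reading off the max/min comparison from the length relation $\lambda(n)={}^tB(n,n')\lambda(n')$---is precisely what is done in \cite{MMY1}. Your choice to go all the way to $n_{k+2}$ (rather than $n_{k+1}$) is a clean way to guarantee positivity without any side discussion, and the final bookkeeping with condition~(a) is handled correctly; the exponent $2\tau+\tau^2$ you obtain can indeed be made arbitrarily small. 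One incidental simplification: the fact that $B(n,n_{k+1})$ has no zero column already follows from the diagonal entries of every cocycle block being $\ge 1$ (the elementary steps are $\mathbf 1+E_{\alpha\beta}$), so the appeal to the length identity there, while correct, is not needed.
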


Let $\varphi \in C^1_{0} ({\bf u} (T))$. Let $n \geq 0$. From the trivial estimate
$$ \Vert S(0,n) D\varphi \Vert_{C^0} \leq \Vert D\varphi \Vert_{C^0} \Vert B(0,n) \Vert $$
and Lemma \ref{lemHolder1}, we get, for $\alpha \in \A$, $x,y \in \iat(T(n))$

 \begin{equation}\label{eq4a}
  \vert S(0,n) \varphi (y) -   S(0,n) \varphi (x) \vert \leq C_{\tau}  \Vert D\varphi \Vert_{C^0}  \Vert B(0,n) \Vert^\tau.
  \end{equation}

There exists a function $\Phi (n)\in \Gamma_\partial (T(n))$ such that $S(0,n) \varphi - \Phi (n)$ is continuous on $I(T(n))$ and vanishes at both 
endpoints of this interval. 
Indeed, let $\Phi (n)$ be the function in $\Gamma (T(n))$ such that $S(0,n) \varphi - \Phi (n)$ is continuous on $I(T(n))$ and vanishes at $u_0(T(n))$. 
As $\varphi\in C^1_{0} ({\bf u} (T))$, $D\varphi$ has mean zero, hence $S(0,n) (D\varphi) = D(S(0,n) \varphi)$ has also mean zero; thus 
$S(0,n) \varphi - \Phi (n)$ vanishes at $u_d(T(n))$. Then $S(0,n) \varphi - \Phi (n)$
belongs to $C^1_{0} ({\bf u} (T(n)))$. As $S(0,n) \varphi$ also belongs to $C^1_{0} ({\bf u} (T(n)))$, we conclude that 
$\Phi (n)\in \Gamma_\partial (T(n))$.

For $\alpha \in \A$, $x \in \iat(T(n))$, we get from (\ref{eq4a})

 \begin{equation}\label{eq41}
 \vert S(0,n) \varphi (x) - \Phi_\alpha (n) \vert \leq C_{\tau}  \Vert D\varphi \Vert_{C^0}  \Vert B(0,n) \Vert^\tau.
\end{equation}

We will show that there exists $\chi \in \Gamma_u(T)$ such that 

\begin{equation}\label{eq42}
 \Vert \Phi(n_\ell) - B(0,n_\ell) \chi \Vert \leq C'_{\tau}  \Vert \varphi \Vert_{C^1}  \Vert B(0,n_\ell) \Vert^\tau,
\end{equation}

and that it satisfies $\Vert \chi \Vert \leq C  \Vert \varphi \Vert_{C^1}$.

\smallskip

We write $C_\tau$ for various constants depending only on $\tau$ (through conditions (a), (c) of section \ref{ssRoth}). From $S(0,n_{\ell +1}) = S(n_\ell,n_{\ell +1})  \circ S(0,n_\ell)$ and (\ref{eq41}), we get, for $\ell \geq 0$

  \begin{equation}\label{eq43}
\Vert \Phi(n_{\ell +1}) - B(n_\ell, n_{\ell +1}) \Phi(n_\ell) \Vert  \leq C_{\tau}  \Vert D\varphi \Vert_{C^0}  \Vert B(0,n_\ell) \Vert^\tau.
\end{equation}

Define $\Lambda_{\ell +1} $ to be the class mod. $\Gamma_s(T(n_{\ell +1}))$ of 
$\Phi(n_{\ell +1}) - B(n_\ell, n_{\ell +1}) \Phi(n_\ell)$.  We define $\chi$ to be the unique element in $\Gamma_u(T)$ which is equal mod. $\Gamma_s(T)$ to 

$$ \Phi(0) + \sum_{\ell >0} B_\flat^{-1} (0,n_\ell) \Lambda_\ell .$$

We will show shortly that the series is convergent. Notice that, if $\chi$ is defined in this way, the image $B(0,n_k) \chi$ is equal mod. $\Gamma_s(T(n_k))$ to

$$ \Phi(n_k) + \sum_{\ell >k} B_\flat^{-1} (n_k,n_\ell) \Lambda_\ell .$$

To estimate the norm of the series $\sum_{\ell >k} B_\flat^{-1} (n_k,n_\ell) \Lambda_\ell$, we split the sum into two parts

\begin{itemize}
\item As long as $\Vert B(0, n_\ell) \Vert ^{\sigma /2} \leq \Vert B(0,n_k) \Vert$, we will use (from condition (c) in section \ref{ssRoth} and (\ref{eq43}))
 \begin{equation}\label{Bflat}
  \Vert B_\flat^{-1} (n_k,n_\ell) \Lambda_\ell \Vert  \leq C_\tau \Vert D\varphi \Vert _{C^0}  \Vert B(0,n_\ell)\Vert ^\tau.
  \end{equation}
  The total contribution of this part of the series is at most
  $$ C_\tau \Vert D\varphi \Vert _{C^0}  \Vert B(0,n_k)\Vert ^\tau.$$
\item When $\Vert B(0, n_\ell) \Vert ^{\sigma /2} > \Vert B(0,n_k) \Vert$, we proceed differently. We use notations of section \ref{ssremarque}. As $\Phi(n)$ belongs to 
$\Gamma_\partial (T(n))$, the function $\Phi(n_{\ell +1}) - B(n_\ell, n_{\ell +1}) \Phi(n_\ell)$ belongs to $\Gamma_\partial (T(n_{\ell +1}))$. Therefore we have, mod. $\Gamma_s(T(n_k))$

\begin{align*}
 \Vert B_\flat^{-1} (n_k,n_\ell) \Lambda_\ell \Vert &=\Vert  b_u^{-1}(n_k,n_\ell) \Lambda_\ell \Vert\\   
 & \leq \Vert b_u(0,n_k) \Vert \Vert   b_u^{-1}(0,n_\ell)   \Vert \Vert   \Lambda_\ell \Vert\\
 &    \leq \Vert B(0,n_k) \Vert \Vert   ^t b_s (0,n_\ell)   \Vert \Vert   \Lambda_\ell \Vert\\
  & \leq    \Vert B(0,n_k) \Vert    \Vert B(0,n_\ell) \Vert^{-\sigma}   \Vert   \Lambda_\ell \Vert \\
  & \leq  C  \Vert B(0,n_\ell) \Vert^{-\sigma/3}     \Vert D \varphi \Vert_{C^0}.                  
   \end{align*}                                        
\end{itemize}

This shows that the series $\sum_{\ell >k} B_\flat^{-1} (n_k,n_\ell) \Lambda_\ell$
is convergent and that we have, mod. $\Gamma_s(T(n_k))$

 \begin{equation}\label{eq44}
\Vert B(0,n_k) \chi - \Phi(n_k) \Vert \leq C_\tau \Vert D\varphi \Vert _{C^0}  \Vert B(0,n_k)\Vert ^\tau.
\end{equation}

Taking $k=0$ and observing that $\Vert\Phi (0)\Vert\le C\Vert\varphi\Vert_{C^0}$, one obtains 
$\Vert\chi\Vert\le C\Vert\varphi\Vert_{C^1}\; .$ 

To show that (\ref{eq42}) holds in $\Gamma_\partial (T(n_k))$ (and not only mod. $\Gamma_s(T(n_k))$) we may assume that $\chi =0$ and proceed as
follows. Write $S(0,n_k)\varphi = \varphi_k - \chi_k$ with $\chi_k\in \Gamma_s(T(n_k))$ and 

\begin{equation}\label{eq45}
\Vert \varphi_k\Vert_{C^0} \le C_\tau \Vert B(0,n_k)\Vert ^\tau \Vert D\varphi\Vert_{C^0}\; . 
\end{equation}

Define 

$$ \Delta_{0}:= \chi_{0}\, , \;\;
\Delta_{k}:= \chi_{k}-B(n_{k-1},n_{k})\chi_{k-1} = \varphi_k - S(n_{k-1},n_{k})\varphi_{k-1}\in \Gamma_s(T(n_k))\, .  $$

Using condition (a) of section \ref{ssRoth}, one obtains 

\begin{equation}\label{eq46}
\Vert \Delta_0  \Vert\le C \Vert\varphi\Vert_{C^0}\, , \;\;
\Vert \Delta_k  \Vert\le C_\tau \Vert B(0,n_k)\Vert^\tau\Vert D\varphi\Vert_{C^0}\;\hbox{for }k>0\, . 
\end{equation}

In the formula 

$$ S(0,n_k)\varphi = \varphi_k-\sum_{\ell\le k} B(n_\ell,n_k)\Delta_\ell $$

one has  from condition (c) in section \ref{ssRoth}

$$  \Vert B(n_\ell,n_k) \Delta_\ell \Vert \leq C_\tau \Vert B(0,n_k) \Vert^{\tau}  \Vert  \varphi\Vert_{C^1}. $$

We conclude that (\ref{eq42}) holds. 
The proof of the theorem is complete. 
\end{proof}

\subsection{Estimates for special Birkhoff sums with $C^r$ data}\label{ssBirkhoff}

The proof of the following lemma can be found in Appendix A of \cite{MMY2}. 

\begin{lemma}\label{lemHolder2}
Let $T$ be an i.e.m. with no connection. Assume that  $T$ satisfies conditions (a) and (b) of section \ref{ssRoth}. 
Let $\rho$ be a positive real number. There exists $\delta >0$, depending on $\rho$ and on the
 constant $\theta$ in condition (b), such that, for any function 
 $\varphi \in C^{\rho}({\bf u}(T))$ of mean $0$ and any $k \geq 0$, the special Birkhoff sum
  $S (0,n_k) \varphi$   satisfies, 
$$ || S(0,n_k) \varphi ||_{C^0} \leq C || B(0,n_k) ||^{1-\delta} || \varphi ||_{C^{\rho}}.$$
The constant $C$ depends on $\rho$, $\theta$ and the other implied constants in 
conditions (a) and (b) of \ref{ssRoth}.
\end{lemma}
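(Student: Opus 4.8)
\textbf{Proof plan for Lemma \ref{lemHolder2}.}

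The plan is to estimate $\Vert S(0,n_k)\varphi\Vert_{C^0}$ by combining three ingredients: the $C^1$-type estimate already available from the proof of Theorem \ref{thmcohom2}, an interpolation between $C^0$ and $C^\rho$ control of $\varphi$ across the (small) intervals $\iat(T(n))$, and the subexponential comparison between $\Vert B(0,n_k)\Vert$ and the lengths of the subintervals coming from Lemma \ref{lemHolder1}. The point is that condition (b) gives genuine contraction $\Vert B(0,n)_{|\Gamma_0(T)}\Vert=\mathcal{O}(\Vert B(0,n)\Vert^{1-\theta})$, and this is what upgrades the trivial bound $\Vert B(0,n_k)\Vert\Vert\varphi\Vert_{C^0}$ to $\Vert B(0,n_k)\Vert^{1-\delta}\Vert\varphi\Vert_{C^\rho}$.

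First I would decompose $\varphi = \chi_0 + \tilde\varphi$ where $\chi_0\in\Gamma(T)$ is the piecewise-constant function taking on each $\iat(T)$ the mean value of $\varphi$ there, so that $\tilde\varphi$ has mean zero on each interval of continuity and $\Vert\tilde\varphi\Vert_{C^0}\lesssim \max_\alpha|\iat(T)|^{\min(\rho,1)}\Vert\varphi\Vert_{C^\rho}$. For the piecewise-constant part $\chi_0$, one has $\int\chi_0 = \int\varphi = 0$, so $\chi_0\in\Gamma_0(T)$ and $S(0,n_k)\chi_0 = B(0,n_k)\chi_0$ is controlled directly by condition (b): $\Vert S(0,n_k)\chi_0\Vert\le C\Vert B(0,n_k)\Vert^{1-\theta}\Vert\varphi\Vert_{C^0}$. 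The remaining term $S(0,n_k)\tilde\varphi$ is where one iterates: writing the special Birkhoff sum along the sequence $(n_\ell)_{\ell\le k}$ and at each stage splitting off again the piecewise-constant (mean-on-each-interval) part, the oscillatory remainder at level $n_\ell$ is a $C^\rho$ function of size $\lesssim \max_\alpha|\iat(T(n_\ell))|^{\min(\rho,1)}\Vert\varphi\Vert_{C^\rho}$ by the argument of (\ref{eq4a}), while its piecewise-constant part, having mean zero over $I(T(n_\ell))$, lands in $\Gamma_0(T(n_\ell))$ and is again contracted by a factor $\Vert B(n_\ell,n_k)\Vert^{1-\theta}$-type quantity via condition (b) applied at $T(n_\ell)$ together with the cocycle relation. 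Using Lemma \ref{lemHolder1}(1) to write $\max_\alpha|\iat(T(n_\ell))|\lesssim |I(T)|\Vert B(0,n_\ell)\Vert^{-1}$ and Lemma \ref{lemHolder1}(2) (condition (a)) to pass freely between $\min_\alpha$ and $\max_\alpha$, each term in the resulting geometric-type sum carries a factor $\Vert B(0,n_\ell)\Vert^{-\min(\rho,1)+\tau}$ against an amplification at most $\Vert B(0,n_k)\Vert$, and condition (a) bounds $\Vert B(n_\ell,n_{\ell+1})\Vert$ subexponentially; summing, the worst term dominates and produces $\Vert B(0,n_k)\Vert^{1-\delta}\Vert\varphi\Vert_{C^\rho}$ for a suitable $\delta>0$ depending on $\rho$, $\theta$ (and shrinking as $\rho\to 0$ or $\theta\to 0$).

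The main obstacle I expect is the bookkeeping in the iteration: one must track separately, at every renormalization time $n_\ell$, the piecewise-constant ``mean'' part (which enjoys the $\theta$-contraction of condition (b) but only because its total mean vanishes — this requires knowing that the mean-zero property is preserved under $S$, which follows since $S$ preserves $\int$) and the genuinely $C^\rho$ oscillatory part (whose smallness comes only from the shortness of the intervals, hence from Lemma \ref{lemHolder1}), and then recombine them so that the losses from condition (a) (the $\tau$'s) and the gains from conditions (b) and Lemma \ref{lemHolder1} balance to leave a net exponent $1-\delta<1$. Getting the explicit dependence of $\delta$ on $\rho$ and $\theta$ right — in particular checking that $\delta$ can be taken positive for every $\rho>0$ — is the delicate quantitative point; but since all the estimates are by now standard consequences of the Roth-type conditions, this is the kind of computation that is deferred to Appendix A of \cite{MMY2}, and here I would simply set up the decomposition and cite that appendix for the arithmetic.
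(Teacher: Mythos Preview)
The paper does not prove this lemma at all: immediately before the statement it says ``The proof of the following lemma can be found in Appendix A of \cite{MMY2}.'' Your proposal ends in exactly the same place, citing that appendix, so in that sense you match the paper precisely.

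The sketch you give of the argument --- split off at each renormalization level the piecewise-constant mean part (which lies in $\Gamma_0$ and is contracted by condition~(b)) from the $C^\rho$ oscillatory part (whose $C^0$ norm is small because the intervals are short, via Lemma~\ref{lemHolder1}), and sum using condition~(a) --- is indeed the shape of the proof in \cite{MMY2}, Appendix~A. One point to be careful about, which you flag only implicitly: condition~(b) as stated bounds $\Vert B(0,n)_{|\Gamma_0(T)}\Vert$, not $\Vert B(n_\ell,n_k)_{|\Gamma_0(T(n_\ell))}\Vert$, and passing from the former to the latter requires an argument (using condition~(a) and the cocycle relation); this is part of the ``bookkeeping'' you defer to the appendix.
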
 

The following lemma improves the conclusion of Theorem \ref{thmcohom2} for functions in $C_0^{r}({\bf u}(T))$.

\begin{lemma}\label{lemHolder3}
Let $r=1+\rho \in (1,2)$, $T$ and $\Gamma_u(T)$ be as in the statement of Theorem \ref{thmcohom1}. 
There exists $\delta_1 >0$, depending only on $r$ and the constants $\theta,\,\sigma$ of section \ref{ssRoth} 
such that for any $\varphi\in C_0^{r}({\bf u}(T))$, the function 
$\chi=L_1(\varphi )$ of Theorem \ref{thmcohom2} satisfies 
  $$ || S(0,n_k)  (\varphi - \chi) ||_{C^0} \leq C || B(0,n_k) ||^{-\delta_1} || \varphi ||_{C^{r}}\, .$$
The constant $C$ depends on $\rho$, $\theta$, $\sigma$ and the other implied constants in 
section \ref{ssRoth}.
\end{lemma}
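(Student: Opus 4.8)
The plan is to run the same telescoping argument as in the proof of Theorem~\ref{thmcohom2}, but to feed in the sharper input provided by Lemma~\ref{lemHolder2} in place of the crude estimate $\Vert S(0,n)D\varphi\Vert_{C^0}\le \Vert D\varphi\Vert_{C^0}\Vert B(0,n)\Vert$. Concretely, since $\varphi\in C_0^r({\bf u}(T))$ with $r=1+\rho$, the derivative $D\varphi$ lies in $C^\rho({\bf u}(T))$ and has mean zero (because $\partial\varphi=0$ forces $\int D\varphi = 0$ by Remark~\ref{boundarysum}). Lemma~\ref{lemHolder2} then gives $\delta>0$ (depending on $\rho$ and $\theta$) with
$$
\Vert S(0,n_k)D\varphi\Vert_{C^0}\le C\Vert B(0,n_k)\Vert^{1-\delta}\Vert\varphi\Vert_{C^r}.
$$
Combining this with Lemma~\ref{lemHolder1} exactly as in the derivation of (\ref{eq4a})--(\ref{eq41}), I get, for $x\in I_\alpha^t(T(n_k))$, the improved oscillation bound
$$
\vert S(0,n_k)\varphi(x)-\Phi_\alpha(n_k)\vert \le C_\tau \Vert B(0,n_k)\Vert^{1-\delta+\tau}\Vert\varphi\Vert_{C^r},
$$
where $\Phi(n_k)\in\Gamma_\partial(T(n_k))$ is defined as before. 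Choosing $\tau<\delta$ we have an exponent $1-\delta+\tau<1$, which is the gain over the $C^1$ case.

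Next I revisit the telescoping construction of $\chi=L_1(\varphi)$. The identities (\ref{eq43})--(\ref{eq44}) go through verbatim with $\Vert D\varphi\Vert_{C^0}$ replaced by $\Vert B(0,n_k)\Vert^{-\delta}\Vert\varphi\Vert_{C^r}$, i.e. every occurrence of the factor $\Vert B(0,n_\ell)\Vert^\tau$ should become $\Vert B(0,n_\ell)\Vert^{-\delta+\tau}$. In the split of the series $\sum_{\ell>k}B_\flat^{-1}(n_k,n_\ell)\Lambda_\ell$: for the first part (where $\Vert B(0,n_\ell)\Vert^{\sigma/2}\le\Vert B(0,n_k)\Vert$) estimate (\ref{Bflat}) now yields a geometric-type sum dominated by $C_\tau\Vert B(0,n_k)\Vert^{-\delta+\tau}\Vert\varphi\Vert_{C^r}$; for the second part (where $\Vert B(0,n_\ell)\Vert^{\sigma/2}>\Vert B(0,n_k)\Vert$) the chain of inequalities ending in (\ref{eq44}) produces an extra factor $\Vert B(0,n_\ell)\Vert^{-\delta}$, which only helps convergence. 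Thus we obtain, mod.\ $\Gamma_s(T(n_k))$,
$$
\Vert B(0,n_k)\chi-\Phi(n_k)\Vert \le C_\tau \Vert B(0,n_k)\Vert^{-\delta+\tau}\Vert\varphi\Vert_{C^r}.
$$
Combined with the oscillation bound for $S(0,n_k)\varphi-\Phi(n_k)$ and the triangle inequality, this gives control of $S(0,n_k)(\varphi-\chi)$ \emph{modulo} $\Gamma_s$ with exponent $-\delta+\tau$... wait, the oscillation bound only has exponent $1-\delta+\tau$, so I need to be careful: the pointwise size of $S(0,n_k)(\varphi-\chi)$ itself (not just its oscillation) is what enters, and the good bound $\Vert B(0,n_k)\Vert^{-\delta+\tau}$ comes precisely from the telescoping, with the oscillation controlling the non-constant part. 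Setting $\delta_1$ to be any positive number strictly less than $\delta$ (e.g.\ $\delta_1=\delta/2$, after absorbing $\tau$) gives the modulo-$\Gamma_s$ version of the claimed inequality.

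Finally, to upgrade from ``mod.\ $\Gamma_s(T(n_k))$'' to an honest estimate in $\Gamma_\partial(T(n_k))$, I repeat the last paragraph of the proof of Theorem~\ref{thmcohom2} verbatim: assuming $\chi=0$, write $S(0,n_k)\varphi=\varphi_k-\chi_k$ with $\chi_k\in\Gamma_s(T(n_k))$ and $\Vert\varphi_k\Vert_{C^0}\le C_\tau\Vert B(0,n_k)\Vert^{-\delta+\tau}\Vert\varphi\Vert_{C^r}$ (the analogue of (\ref{eq45}), with the improved exponent), define the increments $\Delta_k=\varphi_k-S(n_{k-1},n_k)\varphi_{k-1}\in\Gamma_s(T(n_k))$, use condition (a) to bound $\Vert\Delta_k\Vert\le C_\tau\Vert B(0,n_k)\Vert^{-\delta+\tau}\Vert\varphi\Vert_{C^r}$, and use the first half of condition (c) to sum $\Vert B(n_\ell,n_k)\Delta_\ell\Vert\le C_\tau\Vert B(0,n_k)\Vert^{-\delta+\tau}\Vert\varphi\Vert_{C^r}$ over $\ell\le k$. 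This bounds the $\Gamma_s$-component and hence, after choosing $\delta_1\in(0,\delta)$ to absorb all the $\tau$'s, gives the stated inequality
$$
\Vert S(0,n_k)(\varphi-\chi)\Vert_{C^0}\le C\Vert B(0,n_k)\Vert^{-\delta_1}\Vert\varphi\Vert_{C^r}.
$$
The main obstacle—and the only genuinely new ingredient over Theorem~\ref{thmcohom2}—is verifying that the ``bad'' part of the telescoped series (the regime $\Vert B(0,n_\ell)\Vert^{\sigma/2}>\Vert B(0,n_k)\Vert$) still converges and contributes only $O(\Vert B(0,n_k)\Vert^{-\delta_1})$: here one must check that the gain $\Vert B(0,n_\ell)\Vert^{-\delta}$ from Lemma~\ref{lemHolder2} is compatible with the loss $\Vert B(0,n_k)\Vert$ coming from $\Vert b_u(0,n_k)\Vert$ via (\ref{eqbu2}), which it is since the surviving exponent $-\sigma/3-\delta<0$; the bookkeeping of the interplay between $\delta$, $\sigma$ and the free parameter $\tau$ is the part requiring care.
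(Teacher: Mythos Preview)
Your overall strategy is exactly the paper's: feed Lemma~\ref{lemHolder2} into the telescoping machinery of Theorem~\ref{thmcohom2}. Two points, one cosmetic and one substantive.

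First, a slip: your displayed oscillation bound should have exponent $-\delta+\tau$, not $1-\delta+\tau$. Lemma~\ref{lemHolder2} gives $\Vert S(0,n_k)D\varphi\Vert_{C^0}\le C\Vert B(0,n_k)\Vert^{1-\delta}\Vert\varphi\Vert_{C^r}$, and multiplying by $|I_\alpha^t(T(n_k))|\le C_\tau\Vert B(0,n_k)\Vert^{-1+\tau}$ from Lemma~\ref{lemHolder1} yields $-\delta+\tau$. You use the correct exponent in the next paragraph, so this is just a typo.

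Second, a genuine gap in the final step. You propose to bound the stable sum $\sum_{\ell\le k}B(n_\ell,n_k)\Delta_\ell$ using only condition~(c), claiming $\Vert B(n_\ell,n_k)\Delta_\ell\Vert\le C_\tau\Vert B(0,n_k)\Vert^{-\delta+\tau}\Vert\varphi\Vert_{C^r}$. But condition~(c) gives $\Vert B_s(n_\ell,n_k)\Vert\le C_\tau\Vert B(0,n_k)\Vert^\tau$, so you obtain only
\[
\Vert B(n_\ell,n_k)\Delta_\ell\Vert\le C_\tau\Vert B(0,n_k)\Vert^\tau\,\Vert B(0,n_\ell)\Vert^{-\delta+\tau}\Vert\varphi\Vert_{C^r}.
\]
For $\ell$ small (in particular $\ell=0$, where $\Vert\Delta_0\Vert\le C\Vert\varphi\Vert_{C^0}$ carries no negative power at all), the factor $\Vert B(0,n_\ell)\Vert^{-\delta+\tau}$ is bounded, and you are left with $C_\tau\Vert B(0,n_k)\Vert^\tau$ --- not $\Vert B(0,n_k)\Vert^{-\delta_1}$. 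This was harmless in Theorem~\ref{thmcohom2}, whose target was $\Vert B(0,n_k)\Vert^\tau$, but here it destroys the conclusion. The paper fixes this by splitting the sum at $\Vert B(0,n_\ell)\Vert\lessgtr\Vert B(0,n_k)\Vert^{\sigma/2}$: in the small-$\ell$ regime one uses not condition~(c) but the \emph{definition} of the stable exponent~$\sigma$, writing $B(n_\ell,n_k)\Delta_\ell=B(0,n_k)\bigl(B(0,n_\ell)^{-1}\Delta_\ell\bigr)$ with $B(0,n_\ell)^{-1}\Delta_\ell\in\Gamma_s(T)$, which yields $\Vert B(n_\ell,n_k)\Delta_\ell\Vert\le C\Vert B(0,n_k)\Vert^{-\sigma/2}\Vert\varphi\Vert_{C^r}$. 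This second splitting (mirroring the one you already did for the $B_\flat^{-1}$ series) is the missing ingredient.
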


\begin{proof}
We adapt the proof of Theorem \ref{thmcohom2}.  
Observe that, if $\varphi \in C_0^{r}({\bf u}(T))$, the function 
$D\varphi \in C^{\rho}({\bf u}(T))$ has mean $0$ (cf. Remark \ref{boundarysum}).
\par

From Lemma \ref{lemHolder2} and Lemma \ref{lemHolder1}  we get, for $\alpha \in \A$, $x,y \in \iat(T(n))$

 \begin{equation*}
  \vert S(0,n) \varphi (y) -   S(0,n) \varphi (x) \vert \leq C  \Vert D\varphi \Vert_{C^\rho}  \Vert B(0,n) \Vert^{-\delta/2}.
  \end{equation*}

Define the sequence $\Phi (n)$ as in the proof of Theorem \ref{thmcohom2}. 
Instead of (\ref{eq41}) we get 

\begin{equation}
\vert S(0,n) \varphi (x) - \Phi_\alpha (n) \vert \leq C  \Vert D\varphi \Vert_{C^\rho}  \Vert B(0,n) \Vert^{-\delta/2}.
\end{equation}

Then the sequence $\Phi (n_\ell)$ satisfies (instead of (\ref{eq43}))

\begin{equation}
\Vert \Phi(n_{\ell +1}) - B(n_\ell, n_{\ell +1}) \Phi(n_\ell) \Vert  \leq C  \Vert D\varphi \Vert_{C^\rho}  \Vert B(0,n_\ell) \Vert^{-\delta/3}.
\end{equation}

Define then $\Lambda_\ell$ as in the proof of Theorem \ref{thmcohom2}. We have 

\begin{equation}\label{eqLambda}
\Vert \Lambda_\ell \Vert  \leq C  \Vert D\varphi \Vert_{C^\rho}  \Vert B(0,n_\ell) \Vert^{-\delta/3}.
\end{equation}

The series $\sum_{\ell >k} B_\flat^{-1} (n_k,n_\ell) \Lambda_\ell$ is again split into two parts: 

\begin{itemize}
\item as long as $\Vert B(0, n_\ell) \Vert ^{\sigma /2} \leq \Vert B(0,n_k) \Vert$, we will use (from condition (c) in section \ref{ssRoth} and (\ref{eqLambda}))
 \begin{equation}
  \Vert B_\flat^{-1} (n_k,n_\ell) \Lambda_\ell \Vert  \leq C \Vert D\varphi \Vert _{C^\rho}  \Vert B(0,n_\ell)\Vert ^{-\delta/4}.
  \end{equation}
  The total contribution of this part of the series is at most
  $$ C \Vert D\varphi \Vert _{C^\rho}  \Vert B(0,n_k)\Vert ^{-\delta/5}.$$
\item When $\Vert B(0, n_\ell) \Vert ^{\sigma /2} > \Vert B(0,n_k) \Vert$, we obtain as in the proof of Theorem \ref{thmcohom2}

\begin{equation}
 \Vert B_\flat^{-1} (n_k,n_\ell) \Lambda_\ell \Vert  \leq  C  \Vert B(0,n_\ell) \Vert^{-\sigma/3}     \Vert D \varphi \Vert_{C^0}.                  
   \end{equation}                                        
\end{itemize}

We may assume that $\delta <\sigma$. We obtain instead of (\ref{eq44})

\begin{equation}
\Vert B(0,n_k) \chi - \Phi(n_k) \Vert \leq C \Vert D\varphi \Vert _{C^\rho}  \Vert B(0,n_k)\Vert ^{-\delta/5}.
\end{equation}

This gives the required estimate mod. $\Gamma_s (T(n_k))$. To get the full estimate we assume that $\chi =0$ and define $\varphi_k$ and 
$\Delta_k$ as before. Instead of (\ref{eq45}) we have 

\begin{equation}
\Vert \varphi_k\Vert_{C^0} \le C \Vert D\varphi\Vert_{C^\rho} \Vert B(0,n_k)\Vert ^{-\delta/5} \; . 
\end{equation}

The vector $\Delta_k$ satisfies (instead of (\ref{eq46})) 

\begin{equation}\label{eqDelta}
\Vert \Delta_0\Vert \le C\Vert\varphi\Vert_{C^0}\, , \;\;
\Vert \Delta_k\Vert \le C \Vert D\varphi\Vert_{C^\rho} \Vert B(0,n_k)\Vert ^{-\delta/6} \,\hbox{for }k>0\, . 
\end{equation}

In the formula 

$$ S(0,n_k)\varphi = \varphi_k-\sum_{\ell\le k} B(n_\ell,n_k)\Delta_\ell $$

we estimate again the terms in two different ways. 

\begin{itemize}
\item if $\Vert B(0,n_\ell )\Vert \leq \Vert B(0,n_k)\Vert^{\sigma/2}$, we have, as $\Delta_\ell\in \Gamma_s(T(n_\ell))$

\begin{eqnarray*}
 \Vert B(n_\ell,n_k)\Delta_\ell\Vert &\leq &C \Vert B(0,n_k)\Vert^{-\sigma}\Vert B(0,n_\ell)^{-1}\Delta_\ell \Vert \\
& \leq  & C\Vert B(0,n_k)\Vert^{-\sigma} \Vert B(0,n_\ell) \Vert \Vert \varphi\Vert_{C^r} \\
& \leq  & C\Vert B(0,n_k)\Vert^{-\sigma/2} \Vert \varphi\Vert_{C^r} .
\end{eqnarray*}

\item if $\Vert B(0,n_\ell )\Vert > \Vert B(0,n_k)\Vert^{\sigma/2}$, we get from (\ref{eqDelta}) and condition (c) 
in section \ref{ssRoth} 

$$  \Vert B(n_\ell,n_k)\Delta_\ell\Vert \leq C \Vert D\varphi\Vert_{C^\rho} \Vert B(0,n_k)\Vert^{-\delta/7}\; . $$

\end{itemize}

With $\delta<\sigma$ we obtain the conclusion of the Lemma with $\delta_1=\delta/8$. 
 \end{proof}

\subsection{Time decomposition}\label{sstime}
 
We recall from \cite{MMY1} p. 840 how to decompose orbits in order to estimate general Birkhoff sums from 
special Birkhoff sums. 

Let $T$ be an i.e.m.\ with no connection and let $x\in I(T)$, $N>0$. We will decompose the finite orbit $(T^j(x))_{0\le j< N}$.

Let $y$ be the point of this orbit which is closest to $0$. We divide the orbit into a positive part $(T^j(y))_{0\le j< N^+}$ and a negative part 
$(T^j(y))_{N^-\le j<0}$ (with $N^+-N^-=N)$. 

Let $k\ge 0$ be the largest integer such that at least one of the points $T^j(y)$, $0<j<N^+$, belongs to $I(T(n_k))$. Because 
$T(n_k)$ is the first return map of $T$ into $I(T(n_k))$, the points $T^j(y)$, $0<j<N^+$ which belong to 
$I(T(n_k))$ are precisely $T(n_k)(y).\ldots , (T(n_k))^{b(k)}(y)$ for some $b(k)>0$.

Define by decreasing induction $y(\ell)$, $b(\ell)$ for $0\le \ell <k$ as follows. 
Define  $y(k-1)=(T(n_k))^{b(k)}(y)$. The induction hypothesis is that $y(\ell)=T^{N(\ell)}(y)$ is the last point of the orbit 
$(T^j(y))_{0\le j<N^+}$ which belongs to $I(T(n_{\ell+1}))$. The points of the orbit $(T^j(y))_{N(\ell )\le j<N^+}$
which belong to $I(T(n_\ell))$ are $y(\ell), \ldots , (T(n_\ell))^{b(\ell)}(y(\ell))=:y(\ell -1)$ for some $b(\ell )\ge 0$. 

At the end, the point  $y(0)= T^{N(0)}(y)$ satisfies $N(0)<N^+$ and $T^j(y)\notin I(T(n_1))$ for $N(0)<j<N^+$. 
We set $b(0)=N^+-N(0)$ and $y(k)=y$. 

For a function $\varphi$ on $I(T)$ the Birkhoff sum of order $N^+$ of $\varphi$ at $y$ decomposes as 
\begin{equation}\label{eqtimedec}
\sum_{0\le j<N^+} \varphi (T^j(y)) = \sum_{\ell =0}^k \sum_{0\le b <b(\ell)} S(0,n_\ell )\varphi ((T(n_\ell))^b(y(\ell)))\; .
\end{equation}

As $b(\ell )$, for $0\le \ell \le k$, is at most equal to the maximal return time into $I(T(n_{\ell +1}))$ under $T(n_\ell)$ 
one has 
\begin{equation}\label{eqbl}
b(\ell) \le \Vert B(n_\ell, n_{\ell +1})\Vert \; . 
\end{equation}

The negative part of the orbit $(T^j(y))_{N^-\le j<0}$ is decomposed in a similar way. 

\begin{remark} Formulas (\ref{eqtimedec}) and (\ref{eqbl}) imply the estimate in Remark \ref{remtime}.
\end{remark}

\subsection{Space decomposition}\label{ssspace}

Recall from section \ref{ssKZ}, equation (\ref{eqpartition}),  the partition
(for $k\ge 0$):
$$
 \left({\mathcal P}(k)\right)\qquad I(T)=\bigsqcup_{\alpha\in\A}\bigsqcup_{0\le j<B_\alpha (0,n_k)} T^j(I_\alpha^t(T(n_k)))\;\hbox{mod.}\, 0\, . 
$$
Let $x_-<x_+$ be two distinct points in $I(T)$. Let $k$ be the smallest integer such that $(x_-,x_+)$  contains at least  one  interval of the partition ${\mathcal P}(k)$. Let $J^{(k)}(1), \ldots, J^{(k)}(b(k))$ (with $b(k) > 0$) be the intervals
of ${\mathcal P}(k)$ contained in $(x_-,x_+)$. The complement mod.$0$ in $(x_-,x_+)$ of their union    is the union 
of two intervals $(x_+(k),x_+)$ and $(x_-,x_-(k))$, which may be degenerate. 

We define $x_+(\ell)$, for $\ell >k$, as the largest endpoint $<x_+$ of an interval of ${\mathcal P}(\ell)$.
One has $x_+(\ell) \ge x_+(\ell -1)$. The interval $(x_+(\ell -1), x_+(\ell))$ is the union mod.$0$ of intervals 
$J_+^{(\ell)}(1),\ldots J_+^{(\ell)}(b_+(\ell))$ of ${\mathcal P}(\ell)$ for some $b_+(\ell)\ge 0$. 

One has 
$\lim_{\ell\rightarrow +\infty}x_+(\ell)=x_+$. 

One defines similarly $x_-(\ell)$ for $\ell >k$ and $b_-(\ell)\ge 0$ with $\lim_{\ell\rightarrow +\infty}x_-(\ell)=x_-$. 

\smallskip
The decomposition of  $(x_-,x_+)$ is 
\begin{equation}\label{eqspacedec}
(x_-,x_+) = \bigsqcup_{1\le b\le b(k)}J^{(k)}(b) \bigsqcup_{\ell >k}\bigsqcup_{\varepsilon = \pm}\bigsqcup_{1\le b\le b_\varepsilon (\ell)} J_\varepsilon^{(\ell)}(b) \;\;\; \hbox{mod.}\, 0\, . 
\end{equation}

The number of intervals of ${\mathcal P}(\ell +1)$ contained in a single interval of ${\mathcal P}(\ell)$ is at most
$$
max_\beta \sum_\alpha B_{\alpha\beta}(n_\ell , n_{\ell+1})\; . 
$$
One has therefore 
\begin{eqnarray*}
b(k) &\le& \Vert{}^tB(n_{k-1},n_k)\Vert \\
b_\varepsilon (\ell) &\le& \Vert{}^tB(n_{\ell-1},n_\ell)\Vert \, , \\
\end{eqnarray*}
for $\varepsilon = \pm$, $\ell >k$.

\subsection{Special H\"older estimate}\label{ssspecialHolder}

In this section and the next one, $T$ is an i.e.m of restricted Roth type,  $r$ is a real number $>1$ and  
$\varphi$ is a function in $ C^r_0 ({\bf u}(T))$. Substracting $L_1(\varphi)$ if necessary, 
 we  assume that $L_1(\varphi) =0$. From  Lemma \ref{lemHolder3}, one has the estimate
 $$ || S(0,n_k) \varphi ||_{C^0} \leq C || B(0,n_k) ||^{-\delta_1} || \varphi ||_{C^r}. $$
We introduce this estimate into the time decomposition formula (\ref{eqtimedec}), 
using also condition (a) of section \ref{ssRoth} in (\ref{eqbl}), to obtain that the Birkhoff sum of $\varphi$ of any order are uniformly bounded by $C\,|| \varphi ||_{C^r} $.

\smallskip
It follows from the Gottschalk-Hedlund theorem applied to a continuous model of $T$
(cf. \cite{MMY1} p.838-839) that there exists a bounded function $\psi$ on $I(T)$ such that 
$$ \varphi = \psi \circ T -\psi.$$
In \cite{MMY2} p.1598-1599, it is proven that $\psi$ is continuous on 
$[u_0(T), u_d(T)]$. We normalize $\psi$ by $\int_{u_0(T)}^{u_d(T)} \psi(x) dx =0$.
One has 
$$ \Vert \psi \Vert _{C_0} \leq C \Vert \varphi \Vert _{C^r}.$$

\medskip

For an interval $J= (a,b)$, write $\Delta(J) := \psi(b) - \psi(a)$.

For $k \geq 0$, we define a vector $V(k) \in \RA$ by

$$V_{\alpha}(k):= \Delta(\iat(T(n_k))),  \qquad \alpha\in \A$$
\smallskip

The  vectors $V(k)$ are inductively related.

\begin{lemma}\label{lemHolder4}
For $k\ge 0$, one has 
$$|| V(k+1) - (^tB(n_{k},n_{k+1}))^{-1} V(k)|| \leq C\, || B(0,n_{k}) ||^{-\delta_1 /2} || \varphi ||_{C^r} .$$ 
\end{lemma}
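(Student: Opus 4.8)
The plan is to relate the quantity $V(k) = (\Delta(\iat(T(n_k))))_\alpha$ to the jump function $\Phi(n_k) \in \Gamma(T(n_k))$ constructed in the proofs of Theorem \ref{thmcohom2} and Lemma \ref{lemHolder3}. The key observation is that since $\varphi = \psi\circ T - \psi$, the special Birkhoff sum satisfies $S(0,n_k)\varphi = \psi\circ T(n_k) - \psi$ on $I(T(n_k))$; here $T(n_k)$ acts on $\iat(T(n_k))$ as a translation onto $\iab(T(n_k))$, so $S(0,n_k)\varphi$ restricted to $\iat(T(n_k))$ equals $\psi$ translated minus $\psi$, and in particular the increment of $S(0,n_k)\varphi$ across the interval $\iat(T(n_k))$ is $\Delta(\iab(T(n_k))) - \Delta(\iat(T(n_k)))$. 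I would first make precise the identity expressing $V(k)$ (or rather its image under the relevant linear map coming from the top/bottom identification) in terms of $\Phi(n_k)$ and the boundary-type data, using that $S(0,n_k)\varphi - \Phi(n_k)$ is continuous on $I(T(n_k))$ and vanishes at the endpoints.

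Next I would exploit the cocycle structure. From $S(0,n_{k+1})\varphi = S(n_k, n_{k+1})\circ S(0,n_k)\varphi$ and the combinatorial description of the partition \eqref{eqpartition}, the value of $\psi$ at an endpoint $u_i(T(n_{k+1}))$ is reached from endpoints of intervals $\iat(T(n_k))$ by iterating $T(n_k)$; telescoping the differences of $\psi$ along these orbit segments expresses $V(k+1)$ in terms of $V(k)$ via the transpose Kontsevich–Zorich matrix, up to an error governed by how far $S(0,n_k)\varphi$ deviates from the piecewise constant function $\Phi(n_k)$ on each subinterval of the partition $\mathcal{P}$. Concretely, the "exact" relation would be $V(k+1) = {}^t B(n_k,n_{k+1})^{-1} V(k)$ if $S(0,n_k)\varphi$ were exactly piecewise constant on the $\iat$'s; the true relation carries an error controlled by $\|S(0,n_k)\varphi - \Phi(n_k)\|_{C^0}$ on each relevant interval. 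Here is where I invoke \eqref{eq4a}-type oscillation bounds refined by Lemma \ref{lemHolder3} and Lemma \ref{lemHolder2}: the oscillation of $S(0,n_k)\varphi$ on $\iat(T(n_k))$ is $O(\|D\varphi\|_{C^\rho}\|B(0,n_k)\|^{-\delta/2})$, which after passing through the bounded-norm structure of $B(n_k,n_{k+1})^{-1}$ (condition (a) absorbs $\|B(n_k,n_{k+1})\|$ into an arbitrarily small power of $\|B(0,n_k)\|$) yields the claimed bound $C\|B(0,n_k)\|^{-\delta_1/2}\|\varphi\|_{C^r}$.

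I would therefore structure the argument as: (i) record the identity $S(0,n_k)\varphi = \psi\circ T(n_k) - \psi$ on $I(T(n_k))$ and deduce that on $\iat(T(n_k))$ the function $S(0,n_k)\varphi$ has value at the left endpoint equal to $\psi(\text{left of }\iab) - \psi(\text{left of }\iat)$ and similarly at the right endpoint; (ii) using the partition $\mathcal{P}(k)$ and the $T$-orbit structure relating $I(T(n_{k+1}))$-data to $I(T(n_k))$-data, write each coordinate $V_\alpha(k+1) = \Delta(\iat(T(n_{k+1})))$ as a signed sum of increments of $\psi$ over intervals of $\mathcal{P}(k+1)$ refining those of $\mathcal{P}(k)$, which recombines — via the matrix entries $B_{\alpha\beta}(n_k,n_{k+1})$ counting visits — into $({}^tB(n_k,n_{k+1}))^{-1} V(k)$ plus a sum of "internal oscillation" terms; (iii) bound each internal term by the oscillation estimate for $S(0,n_k)\varphi$ coming from Lemma \ref{lemHolder3}, the number of terms being at most $\|{}^tB(n_k,n_{k+1})\|$, and then absorb this combinatorial factor using condition (a) to lower the exponent from $\delta_1$ to $\delta_1/2$.

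The main obstacle I anticipate is step (ii): getting the bookkeeping exactly right for how the increments of $\psi$ across the new intervals $\iat(T(n_{k+1}))$ decompose along $T(n_k)$-orbits and recombine into the $\textit{inverse transpose}$ cocycle matrix rather than the cocycle matrix itself. This inversion is forced by the fact that $V(k)$ records increments of a function defined on the $\textit{base}$ interval and pulled back, dual to how $B(n_k,n_{k+1})$ pushes forward piecewise constant functions; one must track orientations and the top/bottom identification carefully, and the cleanest route is probably to phrase the whole thing in terms of the function $\psi$ directly — writing $V(k)$ as the image of $\psi$ under a fixed linear "difference across interval" functional and checking the commutation with renormalization — rather than manipulating $\Phi(n_k)$. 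Once the exact linear relation is isolated, the error estimate is routine given Lemmas \ref{lemHolder1}, \ref{lemHolder2}, \ref{lemHolder3} and condition (a).
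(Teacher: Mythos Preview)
Your ingredients are correct --- the coboundary identity $S(0,n_k)\varphi = \psi\circ T(n_k) - \psi$, the partition structure, the bound from Lemma~\ref{lemHolder3}, and condition (a) to absorb $\Vert B(n_k,n_{k+1})\Vert$ --- and you would get there. But the paper's execution is shorter and sidesteps precisely the obstacle you flag.

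The paper does not touch $\Phi(n_k)$ at all, and it decomposes in the \emph{opposite} direction from your step (ii): it partitions the \emph{old} interval $\iat(T(n_k))$ into the images $T(n_k)^j(I_\beta^t(T(n_{k+1})))$, $0\le j<B_\beta(n_k,n_{k+1})$, with $T(n_k)^j(I_\beta^t(T(n_{k+1})))\subset \iat(T(n_k))$ exactly when $j$ lies in a set of cardinality $B_{\beta\alpha}(n_k,n_{k+1})$. Since $\psi$ is continuous, $\Delta$ is additive over this partition, giving
\[
V_\alpha(k)=\sum_{\beta}\sum_{j\in N(\beta,\alpha)}\Delta\bigl(T(n_k)^j(I_\beta^t(T(n_{k+1})))\bigr).
\]
Each summand is compared to $V_\beta(k+1)=\Delta(I_\beta^t(T(n_{k+1})))$ using the coboundary identity: the difference $\Delta(T(n_k)^{j+1}J)-\Delta(T(n_k)^jJ)$ equals $S(0,n_k)\varphi(b)-S(0,n_k)\varphi(a)$ for $J=(a,b)$, hence is $O(\Vert B(0,n_k)\Vert^{-\delta_1}\Vert\varphi\Vert_{C^r})$ by Lemma~\ref{lemHolder3}. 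Summing over $j$ and then over $\beta$ yields
\[
\Vert V(k)-{}^tB(n_k,n_{k+1})\,V(k+1)\Vert \le C\,\Vert B(n_k,n_{k+1})\Vert^2\,\Vert B(0,n_k)\Vert^{-\delta_1}\Vert\varphi\Vert_{C^r},
\]
and one then multiplies by $({}^tB(n_k,n_{k+1}))^{-1}$ and invokes condition (a). So the inverse transpose appears only at the last step, by a trivial algebraic inversion; there is no bookkeeping obstacle. Your attempt to write $V_\alpha(k+1)$ as a sum over a refinement is the awkward direction because $\iat(T(n_{k+1}))$ is already a single atom and cannot be decomposed into $V(k)$-data.
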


\begin{proof}
Let $\alpha \in \A$. One has
\begin{equation}\label{eqHolder5}
 V_{\alpha}(k) =  \sum_{\beta \in \A}\sum_{j \in N(\beta,\alpha))} \Delta  (T(n_k)^j (I_{\beta}^t (T(n_{k+1})))), 
 \end{equation}
with
$$ N(\beta,\alpha) = \{ j \in [0, B_{\beta}(n_k,n_{k+1}), \; T(n_k)^j (I_{\beta}^t (T(n_{k+1}))) \subset \iat (T(n_k)) \}.$$
Recall from section \ref{ssKZ}    that $B_{\beta}(n_k,n_{k+1})$ is the return time of $I_{\beta}^t (T(n_{k+1}))$ into $I(T(n_{k+1}))$ under $T(n_k)$. The cardinality of $ N(\beta,\alpha) $ is equal to $B_{\beta \alpha}(n_k,n_{k+1})$.

For $0 \leq j < B_{\beta}(n_k,n_{k+1})$, we compare $\Delta ( T(n_k)^{j+1} (I_{\beta}^t (T(n_{k+1}))))$ and \\$\Delta(T(n_k)^j (I_{\beta}^t(T(n_{k+1})))) $. Writing 
$T(n_k)^j(I_{\beta}^t(T(n_{k+1}))) = (a,b)$, we have, as $\varphi = \psi \circ T - \psi$
\begin{eqnarray*}
\Delta ( T(n_k)^{j+1} (I_{\beta}^t (T(n_{k+1})))) &-& \Delta(T(n_k)^j (I_{\beta}^t(T(n_{k+1}))))  \\
&=&( \psi(T(n_k) b) - \psi(b)) - (\psi(T(n_k)a) - \psi(a))\\
         &=& S(0,n_k) \varphi(b) -S(0,n_k) \varphi(a).
\end{eqnarray*}
We use Lemma \ref{lemHolder3} and sum over $j$ to obtain, for $0 \leq j < B_{\beta}(n_k,n_{k+1})$

$$  |\Delta ( T(n_k)^j (I_{\beta}^t (T(n_{k+1}))))- V_{\beta}(k+1) | \leq C B_{\beta}(n_k,n_{k+1}) ||B(0,n_k)||^{-\delta_1} ||\varphi||_{C^r}. $$

Bringing this estimate into (\ref{eqHolder5}) gives, as $N(\beta,\alpha)$ has $B_{\beta \, \alpha}(n_k,n_{k+1})$ elements

$$|| V(k) - ^tB(n_k,n_{k+1}) V(k+1) || \leq C ||B(n_k,n_{k+1}) ||^2\; ||B(0,n_k)||^{-\delta_1} ||\varphi||_{C^r}. $$

Taking condition (a) of section \ref{ssRoth} into account, this gives the estimate of the lemma.
\end{proof}

\begin{lemma}\label{lemHolder5}
If a sequence of vectors $(W(k))_{k\geq 0}$ in $\RA$ satisfies for $k \geq 0$
$$|| W(k+1) - (^tB(n_k,n_{k+1}))^{-1} W(k)|| \leq C\, || B(0,n_k) ||^{-\delta_1 /2} || \varphi ||_{C^r} ,$$
and $\lim_{k \to \infty} W(k) = 0$, then one has 
$$ || W(k) || \leq C' || B(0,n_k) ||^{-\delta_2} || \,\varphi ||_{C^r}$$
where  the constant $\delta_2$ depends only on $r$, $\theta$ and $\sigma$.  
\end{lemma}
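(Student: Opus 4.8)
\textbf{Proof plan for Lemma \ref{lemHolder5}.}

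The plan is to treat the recursion as a perturbed iteration of the cocycle ${}^t B(n_k,n_{k+1})^{-1}$ and to exploit the hyperbolic splitting of the Kontsevich--Zorich cocycle recorded in sections \ref{ssRoth} and \ref{ssremarque}. Write $A(k):={}^tB(n_k,n_{k+1})^{-1}$ and $A(0,k):=A(k-1)\cdots A(0)$, so that $A(0,k)={}^tB(0,n_k)^{-1}$. Set $\varepsilon(k):=W(k+1)-A(k)W(k)$, so $\Vert\varepsilon(k)\Vert\le C\Vert B(0,n_k)\Vert^{-\delta_1/2}\Vert\varphi\Vert_{C^r}$ by hypothesis. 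Telescoping gives, for $p>k$,
$$ W(p)=A(k,p)W(k)+\sum_{\ell=k}^{p-1}A(\ell+1,p)\,\varepsilon(\ell), $$
where $A(\ell,p):=A(p-1)\cdots A(\ell)$. Letting $p\to\infty$ and using $W(p)\to 0$, one obtains the key identity
$$ W(k)=-\sum_{\ell\ge k}A(k,\ell+1)\,\varepsilon(\ell)= -\,{}^tB(0,n_k)\sum_{\ell\ge k}{}^tB(0,n_{\ell+1})^{-1}\varepsilon(\ell)\,, $$
\emph{provided} the series converges; establishing that convergence and the right size of the sum is the heart of the argument.

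Next I would decompose $W(k)$ using the bases $\mathfrak B_n$ of section \ref{ssremarque} along the block structure
$$ {}^tB(n_k,n_\ell)^{-1}\ \leftrightarrow\ \begin{pmatrix} {}^tb_s(n_k,n_\ell)^{-1} & 0 & 0\\ \star & {}^tb_u(n_k,n_\ell)^{-1} & 0\\ \star & \star & \mathbf 1\end{pmatrix}, $$
obtained by transposing and inverting the triangular form displayed in section \ref{ssremarque}; recall ${}^tb_u^{-1}=b_s$ and ${}^tb_s^{-1}=b_u$ by \eqref{eqbu1}. Here one must be slightly careful because $W(k)$ need not lie in ${\rm Im}\,\Omega(\pi(n_k))$; but condition (c) gives $\Vert B_\flat^{-1}(n_k,n_\ell)\Vert=\mathcal O(\Vert B(0,n_\ell)\Vert^\tau)$ for every $\tau>0$, which together with \eqref{eqbu6} controls the full matrix of ${}^tB(n_k,n_\ell)^{-1}$ except on the direction along $\Gamma_s$, where it is governed by $b_u(n_k,n_\ell)$ and hence, via \eqref{eqbu2} applied at the two ends, is \emph{contracted} by a factor $\Vert B(0,n_k)\Vert\,\Vert B(0,n_\ell)\Vert^{-\sigma}$. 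I would then split the sum $\sum_{\ell\ge k}$ exactly as in the proof of Theorem \ref{thmcohom2} and Lemma \ref{lemHolder3}: for the range $\Vert B(0,n_\ell)\Vert^{\sigma/2}\le\Vert B(0,n_k)\Vert$ use the polynomial bound $\mathcal O(\Vert B(0,n_\ell)\Vert^\tau)$ on the non-contracting blocks against the gain $\Vert B(0,n_\ell)\Vert^{-\delta_1/2}$ from $\varepsilon(\ell)$, getting a total $\mathcal O(\Vert B(0,n_k)\Vert^{-\delta_1/3})$; for the complementary range $\Vert B(0,n_\ell)\Vert^{\sigma/2}>\Vert B(0,n_k)\Vert$ use the contraction $\Vert B(0,n_k)\Vert\,\Vert B(0,n_\ell)\Vert^{-\sigma}\le\Vert B(0,n_\ell)\Vert^{-\sigma/2}$ on the $\Gamma_s$-component, again beating the $\varepsilon(\ell)$ factor. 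Summing a geometric-type series in $\Vert B(0,n_\ell)\Vert$ (the norms grow at least geometrically along the subsequence $n_\ell$ by Lemma \ref{lemHolder1} and condition (a)) yields $\Vert W(k)\Vert\le C'\Vert B(0,n_k)\Vert^{-\delta_2}\Vert\varphi\Vert_{C^r}$ with, say, $\delta_2=\min(\delta_1,\sigma,\theta)/C$ for a universal constant, which depends only on $r,\theta,\sigma$ as claimed.

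The main obstacle is the bookkeeping in the second range of the split: one is propagating $W$ \emph{forward} under a cocycle whose $\Gamma_s$-direction is expanding (that is why $b_u$, not $b_s$, appears), so a naive bound diverges, and the convergence is recovered only by pairing $\Vert B(0,n_k)\Vert$ with the much larger $\Vert B(0,n_\ell)\Vert^{-\sigma}$ coming from the other endpoint — precisely the mechanism in the displayed chain of inequalities in the proof of Theorem \ref{thmcohom2}. One also has to verify that the components of $W(k)$ transverse to ${\rm Im}\,\Omega(\pi(n_k))$, on which $A(\ell,p)$ acts by the identity in the chosen bases, do not spoil the estimate: this is automatic because on that block the hypothesis forces $W(p)\to 0$ to kill the corresponding constant, so that block of $W(k)$ is itself a convergent sum of the $\varepsilon(\ell)$'s and is therefore $\mathcal O(\Vert B(0,n_k)\Vert^{-\delta_1/2})$ directly. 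Assembling the three blocks gives the lemma.
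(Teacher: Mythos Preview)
Your telescoping identity is mis-stated, and the error is not cosmetic. From $W(p)=A(k,p)W(k)+\sum_{\ell=k}^{p-1}A(\ell+1,p)\varepsilon(\ell)$ and $W(p)\to 0$ you cannot conclude $W(k)=-\sum_{\ell\ge k}A(k,\ell+1)\varepsilon(\ell)$: to isolate $W(k)$ you must apply $A(k,p)^{-1}={}^tB(n_k,n_p)$, which gives $W(k)=A(k,p)^{-1}W(p)-\sum_{\ell=k}^{p-1}{}^tB(n_k,n_{\ell+1})\varepsilon(\ell)$. The operator ${}^tB(n_k,n_p)$ has norm $\Vert B(n_k,n_p)\Vert\to\infty$, so neither the first term nor the partial sums are under control. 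More concretely, your claimed bound on the $\Gamma_s$-block is backwards: from \eqref{eqbu2} one has $\Vert b_u(0,n_k)^{-1}\Vert\le C\Vert B(0,n_k)\Vert^{-\sigma}$ and only the trivial $\Vert b_u(0,n_\ell)\Vert\le\Vert B(0,n_\ell)\Vert$, hence $\Vert b_u(n_k,n_\ell)\Vert\le C\,\Vert B(0,n_\ell)\Vert\,\Vert B(0,n_k)\Vert^{-\sigma}$, not $\Vert B(0,n_k)\Vert\,\Vert B(0,n_\ell)\Vert^{-\sigma}$ as you write. With the correct placement of exponents the terms in your split sum behave like $\Vert B(0,n_\ell)\Vert^{1-\delta_1/2}$ and the series diverges; the mechanism you borrow from Theorem~\ref{thmcohom2} worked there because the operator in play was $b_u^{-1}(0,n_\ell)$, which is genuinely contracting in $\ell$.

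The paper sidesteps the expanding direction of ${}^tB^{-1}$ altogether by first applying $\Omega(\pi(n_k))$: setting $\bar W(k)=\Omega(\pi(n_k))W(k)$, the symplecticity relation \eqref{eqsymp} turns the recursion into one under $B(n_k,n_{k+1})$ rather than ${}^tB^{-1}$. The component of $\bar W(k)$ modulo $\Gamma_s$ is then handled not by summing a series but by a contradiction argument using the nondegenerate symplectic pairing between $\Gamma_s$ and $\Gamma_\partial/\Gamma_s$: if $W_\flat(k)$ were too large, pairing against a vector $v\in\Gamma_s(T(n_k))$ and pushing forward to time $\ell$ would contradict $W_\flat(\ell)\to 0$. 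Only after this is the $\Gamma_s$-component treated by a forward sum (now legitimately contracting), and finally the $\mathrm{Ker}\,\Omega$ part of $W(k)$ itself is handled as you correctly describe in your last paragraph. The missing idea in your plan is precisely this $\Omega$-conjugation together with the duality argument; without it the expanding block cannot be tamed.
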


\begin{proof}
For $k \ge 0$, define $\bar W(k) := \Omega (\pi(n_k)) W(k)$, where $\Omega (\pi(n_k))$ is the antisymmetric matrix associated to the combinatorial data of $T(n_k)$.
From the hypothesis of the lemma and equation (\ref{eqsymp}) in section \ref{ssKZsymplectic}, we have

\begin{equation}\label{eqHolder7}
 || \bar W(k+1) - B(n_k,n_{k+1}) \bar W(k) || \leq C\, || B(0,n_k) ||^{-\delta_1 /2} || \varphi ||_{C^r}.
\end{equation}

We will first deal with the size of $\bar W(k) \mod \Gamma_s (T(n_k))$, then with the size of $\bar W(k)$, and finally with the size of $W(k)$ itself.

\medskip

Define $W_{\flat}(k) := \bar W(k) \mod \Gamma_s (T(n_k)) \in \Gamma_{\partial}(T(n_k))/ \Gamma_s (T(n_k))$ and
 $$r_{\flat} (k+1) := W_{\flat}(k+1) - B_{\flat}(n_k,n_{k+1}) W_{\flat}(k).$$ 
 We have 
$$ || r_{\flat} (k+1)  || \leq C\, || B(0,n_k) ||^{-\delta_1 /2} || \varphi ||_{C^r}.$$

\smallskip

For $k <\ell$, we write $W_{\flat}(\ell) = B_{\flat}(n_{k},n_\ell) (W_{\flat} (k) + R_{\flat}(k,\ell))$ with 

$$ R_{\flat}(k,\ell) = \sum_{k <j \leq \ell} (B_{\flat}(n_{k},n_{j}))^{-1} r_{\flat} (j).$$

We control the norm of $(B_{\flat}(n_k,n_j))^{-1}$  from condition (c) of section \ref{ssRoth} and get

$$ || R_{\flat}(k,\ell) || \leq C || B(0,n_k) ||^{-\delta_1/3}  || \varphi ||_{C^r}.$$

We claim that we have 

\begin{equation}\label{eqHolder8}
 || W_{\flat} (k) || \leq 2 C || B(0,n_k) ||^{-\delta_1/3}  || \varphi ||_{C^r}.
 \end{equation}

 Otherwise, the norm of the norm of the vector $w(k,\ell):= W_{\flat} (k) + R_{\flat}(k,\ell)$ would be bounded from below by $C \,|| B(0,n_k) ||^{-\delta_1/3}  || \varphi ||_{C^r}$. But the nondegenerate symplectic pairing between the lagrangian subspace $\Gamma_s$ and the quotient $\Gamma_{\partial}/ \Gamma_s$ (cf. section \ref{ssKZsymplectic}) gives, for any  vector $v \in \Gamma_s(T(n_k))$
$$ <v,w(k,\ell)> \,=\, <v, (B_{\flat}(n_k,n_\ell))^{-1} W_{\flat} (\ell)>\, =\, < B_{\flat}(n_k,n_\ell)v,W_{\flat} (\ell)>.$$
As $v$ belongs to the stable subspace and $W_{\flat} (\ell)$ converges to $0$, the right-hand side converges to $0$. By the nondegeneracy of the pairing, $w(k,\ell)$ converges as $\ell$ goes to $+\infty$ to $0$  mod. $\Gamma_s (T(n_k))$, a contradiction. The claim is proved.

 We may therefore write $\bar W(k) = W_u(k) + W_s(k)$, with 
 $W_s(k) \in \Gamma_s(T(n_k))$ and $|| W_u (k) || \leq 2\,C || B(0,n_k) ||^{-\delta_1/3}  || \varphi ||_{C^r}$. Define 
 $$r_s(k+1) := W_s(k+1) - B(n_k, n_{k+1}) W_s(k).$$
From (\ref{eqHolder7}), one has,  
 \begin{eqnarray*}
 || r_s(k+1) || &\leq& || W_u(k+1) || + || B(n_k,n_{k+1}) W_u(k)|| + C || B(0,n_k) ||^{-\delta_1/2}  || \varphi ||_{C^r} \\
          & \leq & C || \varphi ||_{C^r} (   || B(0,n_{k+1}) ||^{-\delta_1/3}+  || B(0,n_k) ||^{-\delta_1/3} ||B(n_k,n_{k+1}) ||)\\
          & \leq & C  || B(0,n_k) ||^{-\delta_1/4} || \varphi ||_{C^r}.
 \end{eqnarray*}
 
 The vectors $W_s(k)$ satisfy
 
 $$W_s(k) = B(0,n_k) W_s(0) + \sum_{\ell=1}^k B(n_{\ell},n_k) r_s(\ell).$$

 We estimate the sum in the same way than in the last lines of the proof of Lemma \ref{lemHolder3} to get 
 
 $$ || W_s(k) || \leq C \, ||B(0,n_k) ||^{-\sigma \delta_1/12} || \varphi ||_{C^r}.$$
 
 We now have proven that
 
 \begin{equation}\label{eqHolder9}
 || \bar W(k) || \leq C \, ||B(0,n_k) ||^{-\sigma \delta_1/12} || \varphi ||_{C^r}.
 \end{equation}
 
  We may therefore write $W(k) = W_0(k) + W_*(k)$, with $W_0(k) \in {\rm Ker} \,\Omega (\pi(n_k))$ and $ || W_*(k) || \leq C \, ||B(0,n_k) ||^{-\sigma \delta_1/12} || \varphi ||_{C^r}$.
 Define 
 $$r_0(k+1) := W_0(k+1) - \,^tB(n_k,n_{k+1})^{-1} \,W_0 (k).$$
 
  The hypothesis of the lemma gives the estimate
 
 \begin{eqnarray*}
 || r_0(k+1) || &\leq& || W_*(k+1) || + || ^tB(n_k,n_{k+1})^{-1} W_* (k)|| + C\, || B(0,n_k) ||^{-\delta_1 /2} || \varphi ||_{C^r} \\
           &\leq & C\,  ( ||B(0,n_{k+1}) ||^{-\sigma \delta_1/12} + || ^tB(n_k,n_{k+1})^{-1} || \,  ||B(0,n_k) ||^{-\sigma \delta_1/12})  || \varphi ||_{C^r}  \\
           & \leq & C\,  ||B(0,n_k) ||^{-\sigma \delta_1/16} || \varphi ||_{C^r} .
  \end{eqnarray*}
 
 As the extended Kontsevich-Zorich cocycle acts trivially on the kernels of the
  antisymmetric matrices $\Omega(\pi)$ (cf section \ref{ssKZsymplectic}) and $\lim_{\ell \to \infty}  W_0(\ell) = 0$, we obtain
  
 \begin{eqnarray*}
 || W_0(k) || & \leq & \sum_{\ell>k} || r_0(\ell) || \\
             & \leq &  C\,  ||B(0,n_k) ||^{-\sigma \delta_1/16} || \varphi ||_{C^r} .
   \end{eqnarray*}
   This is the estimate of the lemma, with $\delta_2 = \sigma \delta_1 / 16$.
\end{proof}

By the continuity of $\psi$ and Lemma \ref{lemHolder4}, the hypotheses of Lemma \ref{lemHolder5} are satisfied by the sequence $V(k)$ of Lemma \ref{lemHolder4}. We obtain, for all $\alpha \in \A$

\begin{equation}\label{eqHolderspecial}
\Vert \Delta (\iat(T(n_k))) \Vert \leq C || B(0,n_k) ||^{-\delta_2} || \,\varphi ||_{C^r}.
\end{equation}

\subsection{General H\"older estimate}

The first step is to extend inequality (\ref{eqHolderspecial}).

\begin{lemma}\label{lemHolder6}
For $k\ge 0$, $\alpha \in \A$, $0 \le N < B_{\alpha}(0,n_k)$, one has

$$ \Vert \Delta (T^N (\iat(T(n_k)))) \Vert \leq C || B(0,n_k) ||^{-\delta_2} \, || \varphi ||_{C^r}.$$

\end{lemma}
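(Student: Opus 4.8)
The plan is to leverage the space decomposition of section \ref{ssspace} together with the already-established special estimate \eqref{eqHolderspecial}. Fix $k\ge 0$, $\alpha\in\A$, and $0\le N<B_\alpha(0,n_k)$, and write $J_0 := T^N(\iat(T(n_k)))$; this is one of the intervals of the partition $\mathcal P(k)$. Let $(a,b)=J_0$, so that $\Delta(J_0)=\psi(b)-\psi(a)$. The idea is that while $J_0$ itself need not be a union of intervals of $\mathcal P(k')$ for larger $k'$ in a way that telescopes nicely, the point $a$ (resp. $b$) is a left (resp. right) endpoint of an interval of $\mathcal P(k)$, and refining the partition on the right of $J_0$ and on the left of $J_0$ separately one can write $J_0$, up to endpoints, as a countable disjoint union $\bigsqcup_{\ell\ge k}\bigsqcup_{\varepsilon=\pm}\bigsqcup_{1\le q\le b_\varepsilon(\ell)} J_\varepsilon^{(\ell)}(q)$, exactly as in \eqref{eqspacedec} (the central term $\bigsqcup_b J^{(k)}(b)$ being absorbed since here $J_0$ is a single interval of $\mathcal P(k)$). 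Summing the increments of $\psi$ along this decomposition and using continuity of $\psi$ to control the limit at the two endpoints,
$$ \Delta(J_0) = \sum_{\ell\ge k}\ \sum_{\varepsilon=\pm}\ \sum_{1\le q\le b_\varepsilon(\ell)} \Delta\big(J_\varepsilon^{(\ell)}(q)\big). $$

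Next I would bound each term. Every interval $J_\varepsilon^{(\ell)}(q)$ appearing above is of the form $T^M(I_\beta^t(T(n_\ell)))$ for some $\beta\in\A$ and some $0\le M<B_\beta(0,n_\ell)$. Its $\psi$-increment is controlled by comparing $\Delta(T^M(I_\beta^t(T(n_\ell))))$ with $\Delta(I_\beta^t(T(n_\ell)))=V_\beta(\ell)$: the difference of consecutive translates $\Delta(T^{m+1}(I_\beta^t))-\Delta(T^m(I_\beta^t))$ equals $S(0,n_\ell)\varphi$ evaluated at the two endpoints (since $\varphi=\psi\circ T-\psi$), exactly as in the proof of Lemma \ref{lemHolder4}; summing the $\le \Vert B(n_\ell?,\cdot)\Vert$ many such steps and invoking Lemma \ref{lemHolder3} together with \eqref{eqHolderspecial} yields
$$ \big|\Delta\big(J_\varepsilon^{(\ell)}(q)\big)\big| \le C\,\Vert B(0,n_\ell)\Vert^{-\delta_2}\,\Vert\varphi\Vert_{C^r}, $$
after absorbing the polynomial-in-$\Vert B\Vert$ factors using condition (a). Then I sum over $q$: the number of terms $b_\varepsilon(\ell)$ is at most $\Vert{}^tB(n_{\ell-1},n_\ell)\Vert$, which by condition (a) is $\mathcal O(\Vert B(0,n_{\ell-1})\Vert^\tau)=\mathcal O(\Vert B(0,n_\ell)\Vert^\tau)$ for every $\tau>0$. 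Finally, summing over $\ell\ge k$ a geometrically-decaying (in the Rauzy-Veech sense) sequence $\Vert B(0,n_\ell)\Vert^{-\delta_2+\tau}$ — using that $\Vert B(0,n_{\ell+1})\Vert\ge(1+c)\Vert B(0,n_\ell)\Vert$ type growth, or more precisely that $\sum_{\ell\ge k}\Vert B(0,n_\ell)\Vert^{-\delta_2'}=\mathcal O(\Vert B(0,n_k)\Vert^{-\delta_2'})$ for a slightly smaller exponent $\delta_2'$ — gives the bound $C\Vert B(0,n_k)\Vert^{-\delta_2'}\Vert\varphi\Vert_{C^r}$. Shrinking the exponent once more (or re-christening $\delta_2$) absorbs the $\tau$-losses and yields the statement.

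The main obstacle I anticipate is the bookkeeping in the summation over $\ell$: one needs that the number of refinement intervals $b_\varepsilon(\ell)$ times the per-interval bound $\Vert B(0,n_\ell)\Vert^{-\delta_2}$, summed over all scales $\ell\ge k$, still sums to $\mathcal O(\Vert B(0,n_k)\Vert^{-\delta_2})$ and not merely $\mathcal O(\Vert B(0,n_k)\Vert^{-\delta_2+\tau})$ with an uncontrolled accumulation. This is handled by the standard device of starting from a slightly larger exponent: one proves \eqref{eqHolderspecial} and Lemma \ref{lemHolder3} with exponent $\delta_2$, then in the present lemma one only claims the bound with the same $\delta_2$ after having budgeted for $\tau$-losses at the level of $\delta_1$ (which is where the comfortable factors like $\delta_1/8$ in Lemma \ref{lemHolder3} come from). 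The secondary technical point is justifying that the telescoping sum converges absolutely and converges to $\Delta(J_0)$, which follows from $\lim_{\ell\to\infty}x_\pm(\ell)=x_\pm$ and the continuity of $\psi$ established via the Gottschalk–Hedlund argument and \cite{MMY2}.
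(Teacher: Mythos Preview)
Your plan has a genuine gap, and the space decomposition is a red herring here.

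First, the decomposition you propose for $J_0 = T^N(\iat(T(n_k)))$ is vacuous. Since $J_0$ is itself an element of the partition $\mathcal P(k)$, and $\mathcal P(\ell)$ \emph{refines} $\mathcal P(k)$ for every $\ell\ge k$, both endpoints of $J_0$ are already endpoints of $\mathcal P(\ell)$-intervals for all $\ell\ge k$. Applying \eqref{eqspacedec} to $J_0$ therefore returns $J_0$ itself (with $b(k)=1$ and all $b_\pm(\ell)=0$); there is no boundary to refine. If instead you force a decomposition at level $k+1$, you simply get $J_0=\bigsqcup_b J^{(k+1)}(b)$ with each $J^{(k+1)}(b)=T^{M}(I_\beta^t(T(n_{k+1})))$ for some $0\le M<B_\beta(0,n_{k+1})$ --- and bounding $|\Delta(J^{(k+1)}(b))|$ is exactly Lemma~\ref{lemHolder6} at scale $k+1$, so the argument is circular.

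Second, your step~3 contains the whole difficulty and is stated incorrectly. For an interval $J=T^m(I_\beta^t(T(n_\ell)))$ with $0\le m<B_\beta(0,n_\ell)$, the consecutive \emph{$T$-translates} satisfy
\[
\Delta(T^{m+1}J')-\Delta(T^m J') = \varphi(b_m)-\varphi(a_m),
\]
not $S(0,n_\ell)\varphi$ at the endpoints; the latter would only appear for $T(n_\ell)$-translates, but $T^M(I_\beta^t(T(n_\ell)))$ is not a $T(n_\ell)$-iterate of $I_\beta^t(T(n_\ell))$ for $0<M<B_\beta(0,n_\ell)$. Summing over $m$ you get a Birkhoff sum of order $M\le \|B(0,n_\ell)\|$ of $\varphi$ (not of $S(0,n_\ell)\varphi$), which is precisely the quantity you are trying to control.

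The paper's proof bypasses all of this by comparing $\Delta(T^N(\iat(T(n_k))))$ directly with $\Delta(\iat(T(n_k)))$, which is already controlled by \eqref{eqHolderspecial}. The difference is
\[
\int_{\iat(T(n_k))}\ \sum_{i=0}^{N-1} D\varphi\circ T^i(x)\,dx,
\]
i.e.\ an integral of a Birkhoff sum of $D\varphi$. The \emph{time} decomposition \eqref{eqtimedec} applied to this sum of $D\varphi$, together with Lemma~\ref{lemHolder2}, gives a pointwise bound $C\|D\varphi\|_{C^\rho}\|B(0,n_k)\|^{1-\delta}$, and then the integration over $\iat(T(n_k))$ --- whose length is $\le C_\tau\|B(0,n_k)\|^{-1+\tau}$ by Lemma~\ref{lemHolder1} --- supplies the crucial extra factor that brings the exponent below zero. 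The space decomposition is used only afterwards, in Section~3.10, to pass from intervals of $\mathcal P(k)$ to arbitrary pairs of points.
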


\begin{proof}
From $\varphi = \psi \circ T - \psi$, one obtains

\begin{equation}\label{eqgenHolder1}
 \Delta (T^N (\iat(T(n_k))))  -  \Delta (\iat(T(n_k))) = \int_{\iat(T(n_k))}  \sum_{i=0}^{N -1} D\varphi \circ T^i (x) \,dx.
 \end{equation}
 
 Observe that, for $x \in \iat(T(n_k))$, the points $T^i(x), 0 < i <  B_{\alpha}(0,n_k)$ are farther from $0$ than $x$. According to (\ref{eqtimedec}), we write
 
$$ \sum_{i=0}^{N -1} D\varphi \circ T^i (x) =  \sum_{\ell =0}^{k-1} \sum_{0\le b <b(\ell)} S(0,n_\ell )\,D\varphi ((T(n_\ell))^b(y(\ell)))\; .$$

Here, the integers $b(\ell)$ satisfy, according to (\ref{eqbl})

$$b(\ell) \le \Vert B(n_\ell, n_{\ell +1})\Vert \; . $$

We use Lemma \ref{lemHolder2} to estimate the special Birkhoff sums of $D\varphi$.
This gives

$$ \vert \sum_{i=0}^{N -1} D\varphi \circ T^i (x) \vert \leq C  || D\varphi ||_{C^{\rho}}  \sum_{\ell =0}^{k-1} \Vert B(n_\ell, n_{\ell +1})\Vert\; \Vert B(0, n_{\ell})\Vert ^{1-\delta}.$$
From Lemma \ref{lemHolder1} and condition (a), one obtains, as $\delta_2 < \delta$

$$ \vert  \int_{\iat(T(n_k))}  \sum_{i=0}^{N -1} D\varphi \circ T^i (x) \,dx \vert \leq 
C  || D\varphi ||_{C^{\rho}}   \Vert B(0,n_k) \Vert ^{-\delta_2} .$$

The estimate of the lemma now follows from (\ref{eqHolderspecial}) and (\ref{eqgenHolder1}).

\end{proof}

We are now ready to complete the proof of the theorem. Let $x_-<x_+$ be distinct points in 
$I(T)$. Let $k$ be the smallest integer such that $x_-,x_+$  contains 
at least one element of the partition $\mathcal P(k)$ of section \ref{ssspace}. According to (\ref{eqspacedec}), we write
$$(x_-,x_+) = \bigsqcup_{1\le b\le b(k)}J^{(k)}(b) \bigsqcup_{\ell >k}\bigsqcup_{\varepsilon = \pm}\bigsqcup_{1\le b\le b_\varepsilon (\ell)} J_\varepsilon^{(\ell)}(b) \;\;\; \hbox{mod.}\, 0\, . 
$$
Here, $J^{(k)}(b)$ is an element of $\mathcal P(k)$ and $J_\varepsilon^{(\ell)}(b)$ 
is an element of $\mathcal P(\ell)$. One has from section \ref{ssspace}

\begin{eqnarray*}
b(k) &\le& \Vert{}^tB(n_{k-1},n_k)\Vert ,\\
b_\varepsilon (\ell) &\le& \Vert{}^tB(n_{\ell-1},n_\ell)\Vert \, . \\
\end{eqnarray*}

One has 
$$ \psi(x_+) - \psi (x_-) =   \sum_{1\le b\le b(k)}\Delta(J^{(k)}(b)) \sum_{\ell >k}\sum_{\varepsilon = \pm}\sum_{1\le b\le b_\varepsilon (\ell)} \Delta(J_\varepsilon^{(\ell)}(b)).$$

Using Lemma \ref{lemHolder6}, one obtains

\begin{align*}
\vert \psi(x_+) - \psi (x_-) \vert & \leq \,C  \, || \varphi ||_{C^r} \, \sum_{\ell \ge k}   \Vert{}^tB(n_{\ell-1},n_\ell)\Vert \,  || B(0,n_\ell) ||^{-\delta_2} \\
     & \leq \, C  \, || \varphi ||_{C^r} \, || B(0,n_k) ||^{-\delta_2 /2},
\end{align*}

where we have used condition (a) and the fact that $ \Vert{}^tB(n_{\ell-1},n_\ell)\Vert $ and $ \Vert B(n_{\ell-1},n_\ell)\Vert $ have the same order. On the other hand, by the definition of $k$ and Lemma \ref{lemHolder1}, one has, for any $\tau >0$

\begin{align*}
\vert x_+ - x_- \vert & \geq \min _{\alpha \in \A} \vert I_{\alpha}^t (n_k) \vert \\
    & \geq  C_{\tau}^{-1} ||  B(0,n_k) ||^{-\tau}   \max _{\alpha \in \A} \vert I_{\alpha}^t (n_k) \vert \\
    & \geq C_{\tau}^{'-1} \vert I(T) \vert \Vert  B(0,n_k) \Vert ^{-1-\tau}.
\end{align*}

We thus obtain

$$ \vert \psi(x_+) - \psi (x_-) \vert \leq \, C  \, || \varphi ||_{C^r} \, \left ( \frac {\vert x_+ - x_- \vert}{\vert I(T) \vert} \right ) ^{\delta_2/3}.$$

The proof of Theorem \ref{thmHolder1} is complete.

\subsection{Higher differentiability}\label{sechigh}

\smallskip
To formulate a result allowing for smoother solutions of the cohomological equation, we introduce the same finite-dimensional spaces than in \cite{MMY2} (although the notations are slightly different). In the following definitions, $T$ is an i.e.m., $D$ is a nonnegative integer, and  $r$ is a real number $\geq 0$.

\begin{definition}\label{defcrD} We denote by 

\begin{itemize}
\item $C^{D+r}_{D}({\bf u}(T))$ the subspace of 
$C^{D+r}({\bf u}(T))$ consisting of functions $\varphi$ which satisfy $\partial D^i \varphi =0$ for $0 \leq i \leq D$.
\item  $\Gamma(D,T)$ the subspace of $C^{\infty}_{D}({\bf u}(T))$ consisting  of functions $\chi$ whose restriction to each $[u_{i-1}(T), u_i(T)]$ is a polynomial of degree $\leq D$.
\item  $\Gamma_s (D, T)$ the subspace of $\Gamma(D,T)$ consisting of functions $\chi$ which can be written as $\chi = \psi \circ T - \psi$, for some function $\psi$ of class $C^{D}$ on the closure of $[u_0(T), u_d(T)]$.
\end{itemize}
\end{definition} 

It is proven in \cite{MMY2}  p.1602  that the dimension of $\Gamma (D,T)$ is equal to
$2g + D(2g-1)$, and that the dimension of $\Gamma_s (D,T)$ is equal to $g+D$ when $T$ is of restricted Roth type.

\begin{theorem}\label{thmHolder2}
Let $T$ be an i.e.m.  of restricted Roth type. Let 
$D$  be a nonnegative integer, and let $r$ be a real number $> 1$. Let 
$\Gamma_u(D,T)$ be a subspace of $\Gamma (D,T)$ 
 supplementing $\Gamma_s (D,T)$.  There exist a real number $\bar \delta >0$, and bounded linear operators $L_0: \varphi \mapsto \psi$ from 
 $C^{D+r}_{D} ({\bf u} (T))$ to $C^{D+ \bar \delta} ([u_0(T),u_d(T)])$ and $L_1: \varphi \mapsto \chi$ from 
$C^{D+r}_{D} ({\bf u} (T))$ to $\Gamma_u(D,T)$ such that any $\varphi \in C^{D+r}_{D} ({\bf u} (T))$ satisfies
$$ \varphi = \chi + \psi \circ T - \psi.$$
The number $\bar \delta$ is the same than in Theorem \ref{thmHolder1}.
\end{theorem}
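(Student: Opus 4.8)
The strategy is to reduce the statement for $C^{D+r}_D$ data to the case $D=0$, which is Theorem \ref{thmHolder1}, by an induction on $D$ using integration and differentiation as the bridge between consecutive levels. The key observation is that if $\varphi\in C^{D+r}_D({\bf u}(T))$ then $D\varphi\in C^{D-1+r}_{D-1}({\bf u}(T))$: indeed the condition $\partial D^i\varphi=0$ for $0\le i\le D$ translates exactly into $\partial D^i(D\varphi)=0$ for $0\le i\le D-1$, so $D\varphi$ lives one level down. Conversely, given a piecewise-$C^1$ primitive, taking an antiderivative raises the level by one, at the cost of controlling the piecewise-polynomial correction term and the constants of integration on each continuity interval. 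So the inductive step is: apply the theorem at level $D-1$ to $D\varphi$, obtaining $D\varphi=\tilde\chi+\tilde\psi\circ T-\tilde\psi$ with $\tilde\chi\in\Gamma_u(D-1,T)$ piecewise polynomial of degree $\le D-1$ and $\tilde\psi\in C^{D-1+\bar\delta}$; then integrate to recover $\varphi$ up to a coboundary of a $C^{D+\bar\delta}$ function plus a piecewise-polynomial term of degree $\le D$, and finally project that piecewise-polynomial term onto $\Gamma_u(D,T)$ along $\Gamma_s(D,T)$.

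First I would set up the bookkeeping for $D=1$ carefully, since the passage from $D$ to $D-1$ is uniform once the first step is understood. Given $\varphi\in C^{1+r}_1({\bf u}(T))$, the derivative $D\varphi$ lies in $C^r_0({\bf u}(T))$, so Theorem \ref{thmHolder1} produces $\chi_1:=L_1(D\varphi)\in\Gamma_u(T)$ and $\psi_1:=L_0(D\varphi)\in C^{\bar\delta}$ with $D\varphi=\chi_1+\psi_1\circ T-\psi_1$. Let $\Psi$ be any continuous primitive of $\psi_1$ on $[u_0(T),u_d(T)]$; then $\Psi\in C^{1+\bar\delta}$ and $\Psi\circ T-\Psi$ is a piecewise-$C^{1+\bar\delta}$ function whose derivative is $\psi_1\circ T-\psi_1$ up to the piecewise-constant jumps coming from the translations defining $T$. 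Consequently $\varphi-(\Psi\circ T-\Psi)$ has derivative equal to $\chi_1$ plus a piecewise-constant function, hence is itself a piecewise polynomial of degree $\le 1$; call it $\chi\in\Gamma(1,T)$. Since $\varphi\in C^{1+r}_1$ and $\Psi\circ T-\Psi$ is a coboundary, Proposition \ref{propboundary}(1) shows $\chi$ satisfies the same boundary conditions, so $\chi\in\Gamma(1,T)$ is legitimate. Finally decompose $\chi=\chi_u+\psi_s\circ T-\psi_s$ along $\Gamma_u(1,T)\oplus\Gamma_s(1,T)$, with $\psi_s\in C^1$; absorbing $\psi_s$ into $\Psi$ gives the desired $L_1(\varphi):=\chi_u$ and $L_0(\varphi):=\Psi+\psi_s$ with $L_0(\varphi)\in C^{1+\bar\delta}$. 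Boundedness of $L_0,L_1$ follows from boundedness at level $0$ together with boundedness of differentiation $C^{1+r}_1\to C^r_0$, of the (bounded, by finite-dimensionality) projection onto $\Gamma_u(1,T)$, and of a fixed choice of primitive normalized by its integral; the dimension count $\dim\Gamma(1,T)=2g+(2g-1)$, $\dim\Gamma_s(1,T)=g+1$ (quoted from \cite{MMY2}) guarantees the splitting is well-posed.

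For general $D$, I would iterate this construction $D$ times, peeling off one derivative at each stage, and at the end reassemble: the primitives taken at each stage compose into a single function $\psi$ of class $C^{D+\bar\delta}$ (each integration raises Hölder-differentiability by exactly one, and $\bar\delta$ never changes because it is governed only by the base case), and the piecewise-polynomial corrections, which start at degree $0$ and gain one degree per integration, assemble into an element of $\Gamma(D,T)$ whose $\Gamma_u(D,T)$-component is $L_1(\varphi)=\chi$. The identity $\varphi=\chi+\psi\circ T-\psi$ then holds by construction, and $\bar\delta$ is literally the constant of Theorem \ref{thmHolder1}.

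The main obstacle I anticipate is not the Hölder estimate — that is entirely inherited from Theorem \ref{thmHolder1} — but the \emph{bookkeeping of the finite-dimensional correction spaces} and the verification that the boundary/cohomological conditions propagate correctly under integration. Specifically one must check: that $\varphi\in C^{D+r}_D$ forces $D\varphi\in C^{D-1+r}_{D-1}$ (and not merely $C^{D-1+r}_{D-2}$), which is the content of the definition $\partial D^i\varphi=0$ for $0\le i\le D$; that the piecewise-constant jumps introduced by writing $D(\Psi\circ T-\Psi)$ are genuinely piecewise constant and do not spoil the degree count; and that the resulting $\chi$ indeed lands in $\Gamma(D,T)$ so that the decomposition along $\Gamma_u(D,T)\oplus\Gamma_s(D,T)$ makes sense — here one invokes the dimension formulas from \cite{MMY2} and the fact that the splitting is a fixed linear-algebraic choice, hence automatically bounded. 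Once these compatibilities are pinned down, the proof is a mechanical descent to $D=0$.
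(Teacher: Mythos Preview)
Your approach is correct and is precisely the induction-on-$D$ via differentiation/integration that the paper has in mind when it writes ``the derivation of this theorem from Theorem \ref{thmHolder1} (the case $D=0$) is easy and done in \cite{MMY2} p.1602--1603.'' One small imprecision: since $T$ is a piecewise translation, $D(\Psi\circ T-\Psi)=\psi_1\circ T-\psi_1$ \emph{exactly} on each continuity interval (no extra piecewise-constant term), so $D(\varphi-(\Psi\circ T-\Psi))=\chi_1$ on the nose; this only simplifies your argument, and the verification that $\chi\in\Gamma(D,T)$ (i.e.\ $\partial\chi=\partial D\chi=0$) goes through as you indicate, using $\chi_1\in\Gamma_u(T)\subset\Gamma_\partial(T)$.
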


The derivation of this theorem from  Theorem \ref{thmHolder1} (the case $D = 0$) is easy and done in \cite{MMY2} p.1602-1603.

\end{document}